\numberwithin{equation}{section}
\theoremstyle{plain}
\newtheorem{theorem}{Theorem}[section]
\newtheorem{corollary}[theorem]{Corollary}
\newtheorem{lemma}[theorem]{Lemma}
\newtheorem{proposition}[theorem]{Proposition}
\newtheorem{remark}[theorem]{Remark}  
\newtheorem{definition}[theorem]{Definition}
\newtheorem{question}[theorem]{Question}
\newcommand\C{\mathbb C}      
\newcommand\R{\mathbb R}
\newcommand\N{\mathbb N}        
\newcommand\D{\mathbb D}
\renewcommand\Re{\operatorname{Re}}       
\newcommand{\eps}{\varepsilon}
\newcommand{\ex}{\operatorname{ex}} 
\newcommand{\supp}{\operatorname{supp}} 
\newcommand{\aut}{\operatorname{Aut}} 
\newcommand{\B}{\mathbb B}
 \newcommand{\M}{\mathcal{M}}
 \newcommand{\Ho}{\mathcal{H}} 
\newcommand\with{\ \vrule\ }  
\newcommand*\origfootnoterule{}
\let\origfootnoterule\footnoterule
\renewcommand*{\footnoterule}{\vskip 0pt plus .01fil\origfootnoterule}
\begin{document}
\parindent 0pt 

\title{On the parametric representation of univalent functions on the polydisc}
\author{Sebastian Schlei{\ss}inger}
\setcounter{section}{0}

\maketitle

\begin{abstract}
We consider support points of the class $S^0(\D^n)$ of normalized univalent mappings on the polydisc 
$\D^n$ with parametric representation and we prove sharp estimates for coefficients of degree 2. 
\end{abstract}

\textbf{Keywords:} Loewner theory, parametric representation, univalent functions, polydisc, support points.

\section{Introduction}

Let $\D=\{z\in\C\,|\, |z|<1\}$ be the unit disc and let $f:\D\to \C$ be a univalent mapping normalized with 
$f(z)=z+\sum_{n\geq 2}a_nz^n$. The Bieberbach conjecture (see \cite{Bie}) states that 
$$ |a_n|\leq n \quad \text{for all $n\geq 2.$}$$
Loewner has proven the case $n=3$ in \cite{Loewner:1923} by introducing a new tool for the study of univalent functions, a 
``parametric representation'' for $f$ via a certain differential equation. 
The Bieberbach conjecture has been proven completely by de Branges; see \cite{dB85}. Univalent functions and Loewner theory have also been studied in higher dimensions.  A general theory for certain complex manifolds has been established in \cite{MR2507634}. \\

For $n\geq 2$, the most studied subdomains of $\C^n$ are the polydisc $\D^n$ and the Euclidean unit ball $\B^n.$ 
Also Loewner theory can be studied on these domains and, 
in particular, one can define normalized univalent functions having a parametric representation. These functions form a compact set and naturally lead to extremal problems,
e.g. finding coefficient bounds.\\
 While there is a lot of recent research on  extremal problems for functions with parametric representation on $\B^n$ 
 (\cite{MR2943779, GHKK14, bsp, pont, GHKK16, HIK16, BGHK16, braro, GHKK17}), the case of the polydisc gained only little 
 interest since Poreda's introduction of the class $S^0(\D^n)$ in 1987 (\cite{MR1049182, MR1049183}).\\
 
In what follows, we recapitulate the definition and some basic properties of $S^0(\D^n)$ in Section \ref{sec_2}. In Section \ref{sec_3}, we prove 
some statements for general support points of $S^0(\D^n)$ and in Section \ref{sec_4}, we prove estimates for all coefficients of degree 2
and give several examples showing that these estimates are sharp.\\

 \textbf{Update:} When writing this paper, the author was not aware of the 
 work \cite{GHK}, which proves several results 
 concerning parametric representation of univalent functions on $\D^n$. In particular, 
 the estimates from Theorem \ref{bie2} are shown there for an even more general setting.

\section{The classes \texorpdfstring{$\M(\D^n)$}{} and \texorpdfstring{$S^0(\D^n)$}{}}\label{sec_2}

We denote by $\Ho(\D^n,\C^n)$ the set of all holomorphic mappings $f:\D^n\to \C^n.$\\
Furthermore, we let $S(\D^n)$ be the set of all univalent functions $f\in \Ho(\D^n,\C^n)$ with $f(0)=0$ and $Df(0)=I_n.$ This class is not compact 
when $n\geq 2$, as the simple examples $$(z_1,z_2)\mapsto (z_1+nz_2^2, z_2), \quad n\in\N,$$ show.
In \cite{MR1049182}, Poreda introduced the class $S^0(\D^n)$ as the set of all $f\in S(\D^n)$ having 
``parametric representation''. Later,  G. Kohr defined the corresponding class $S^0(\mathcal{\B}^n)$ for the unit ball, see \cite{MR1929522}, which has been extensively studied since its introduction.\\
The class $S^0(\D^n)$ is defined via Loewner's differential equation. First, one considers the set
$$ \mathcal{M}(\D^n):=\left\{h\in\Ho(\D^n,\C^n)\;|\; h(0)=0, Dh(0)=-I_n, \Re\left(\frac{h_j(z)}{z_j}\right)\leq 0 \;\text{when}\;
\|z\|_\infty=|z_j|>0 \right\}. $$

\begin{remark}\label{dim_one}
For $n=1,$ we have $$\M(\D)=\{z\mapsto -zp(z)\,|\, p\in \mathcal{P}\},$$
where $\mathcal{P}$ denotes the Carath\'{e}odory class of all holomorphic functions
$p:\D\to \C$ with $\Re(p(z))>0$ for all  $z\in\D$ and $p(0)=1$.
The class $\mathcal{P}$ can be characterized by the Riesz-Herglotz representation formula:
\begin{equation*}   \mathcal{P} = \left\{
\int_{\partial \D} \frac{u + z}{u -z} \, \mu(du) \,|\, \mu \; 
\text{is a probability measure on $\partial \D$} \right\}.  \end{equation*}
A simple consequence is the following coefficient bound (Carath\'{e}odory's lemma):\\ Write $p(z)=1+\sum_{n\geq 1} c_n z^n.$ 
Then 
\begin{equation}\label{Chris}   |c_n|\leq 2 \quad \text{for all $n\geq 1.$}  \end{equation}
Equality holds, e.g., if $p(z)=\frac{u+z}{u-z}$ for some $u\in \partial \D,$
i.e. when $\mu$ is a point measure; see \cite[Corollary 2.3]{Pom:1975} for a complete characterization.
\end{remark}

The class $\M(\D^n)$ is closely related to the class $S^*(\D^n)$ of all starlike functions, i.e. those $f\in S(\D^n)$ such that $f(\D^n)$ is a starlike domain with respect to $0$.

\begin{theorem}[\cite{MR0261040}]\label{Juergen}
Let $f:\D^n \to \C^n$ be locally biholomorphic, i.e. $Df(z)$ is invertible for every $z\in\D^n,$ with $f(0)=0$ and $Df(0)=I_n$. Then $f\in S^*(\D^n)$  if and only if the function 
$z\mapsto -(Df(z))^{-1}\cdot f(z)$ belongs to $\M(\D^n).$
\end{theorem}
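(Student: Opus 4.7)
The plan is to prove both implications by running the Loewner-type flow of $h$ and invoking the polydisc Schwarz lemma. Set $h(z) := -(Df(z))^{-1} f(z)$. The normalizations $h(0)=0$ and $Dh(0)=-I_n$ follow immediately from $f(z)=z+\LandauO(\|z\|^2)$ and $(Df(z))^{-1}=I_n+\LandauO(\|z\|)$, so in both directions only the sign condition $\Re(h_j(z)/z_j)\leq 0$ at maximizing indices is at issue.

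For the direction $f\in S^*(\D^n)\Rightarrow h\in\M(\D^n)$: starlikeness of $f(\D^n)$ with respect to $0$ gives $e^{-t}f(z)\in f(\D^n)$ for all $t\geq 0$ and $z\in\D^n$, so $v(t,z):=f^{-1}(e^{-t}f(z))$ is a well-defined holomorphic map $v(t,\cdot):\D^n\to\D^n$ with $v(0,z)=z$ and $v(t,0)=0$. Differentiating the identity $f(v(t,z))=e^{-t}f(z)$ in $t$ yields $Df(v)\,\partial_t v=-f(v)$, i.e.\ $\partial_t v=h(v)$. Each component $v_j(t,\cdot):\D^n\to\D$ vanishes at the origin, so the polydisc Schwarz lemma (restrict to the line $\lambda\mapsto v_j(t,\lambda z/\|z\|_\infty)$ and apply the one-variable lemma) gives $|v_j(t,z)|\leq\|z\|_\infty$. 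Now fix $z_0$ with $|z_{0,j}|=\|z_0\|_\infty>0$ and set $w(t):=v_j(t,z_0)/z_{0,j}$. Then $|w(t)|\leq 1=|w(0)|$, so the right derivative of $|w|^2$ at $t=0$ is nonpositive; it equals $2\Re(h_j(z_0)/z_{0,j})$, which is the desired inequality.

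For the converse $h\in\M(\D^n)\Rightarrow f\in S^*(\D^n)$: solve the autonomous ODE $\partial_t\phi_t(z)=h(\phi_t(z))$ with $\phi_0(z)=z$. Whenever $|\phi_{t,j}(z)|=\|\phi_t(z)\|_\infty$, the $\M(\D^n)$-condition gives $\partial_t|\phi_{t,j}|^2=2|\phi_{t,j}|^2\Re(h_j(\phi_t)/\phi_{t,j})\leq 0$. A standard upper-envelope/Dini argument, or the perturbation trick of replacing $h$ by $h(z)-\eps z$ and letting $\eps\downarrow 0$, shows that $t\mapsto\|\phi_t(z)\|_\infty$ is nonincreasing and hence $\phi_t(\D^n)\subset\D^n$ for all $t\geq 0$. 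The function $g(t,z):=f(\phi_t(z))$ then satisfies $\partial_t g=Df(\phi_t)h(\phi_t)=-f(\phi_t)=-g$, so $g(t,z)=e^{-t}f(z)$. Thus $e^{-t}f(z)=f(\phi_t(z))\in f(\D^n)$ for every $z\in\D^n$ and $t\geq 0$, which is precisely starlikeness of $f(\D^n)$ with respect to $0$.

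The main technical obstacle is the invariance $\phi_t(\D^n)\subset\D^n$ in the converse: the sign condition defining $\M(\D^n)$ is only active at indices realizing the max-norm, and the maximizing index can jump along a trajectory. The cleanest remedy is the strict-approximation trick just mentioned, since for $\eps>0$ one has $\partial_t|\phi_{t,j}|^2\leq -2\eps|\phi_{t,j}|^2$ at any maximizing index, making the polydisc strictly invariant; one then passes to the limit $\eps\downarrow 0$ using continuous dependence on the vector field. The other steps are either formal (chain-rule differentiations) or the standard Schwarz lemma on $\D^n$, and should not cause difficulty.
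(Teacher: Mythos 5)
The paper does not prove this statement at all --- it is quoted from Suffridge [Suf70] --- so your proposal has to be judged against the classical argument, and your flow-based approach is indeed the standard one. Your forward direction is correct and complete: the transition maps $v(t,z)=f^{-1}(e^{-t}f(z))$, the polydisc Schwarz lemma, and the sign of the right derivative of $|v_j(t,z_0)/z_{0,j}|^2$ at $t=0$ give exactly the inequality defining $\M(\D^n)$. The invariance step in the converse is also fine as sketched (either the Dini-derivative argument on $\max_j|\phi_{t,j}|^2$ or your $\eps$-perturbation handles the jumping maximizing index).

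The genuine gap is in the conclusion of the converse. In this paper $S^*(\D^n)\subset S(\D^n)$, so $f\in S^*(\D^n)$ means $f$ is \emph{univalent} with starlike image; your argument only yields $e^{-t}f(\D^n)\subseteq f(\D^n)$, i.e.\ starlikeness of the image of a merely locally biholomorphic map, and univalence is precisely the nontrivial content of Suffridge's theorem (your own forward direction uses it to define $f^{-1}$). Moreover, the nonstrict inequality you use gives only that $\|\phi_t(z)\|_\infty$ is nonincreasing, which is not enough to repair this. The standard fix inside your framework is quantitative: for $\|z\|_\infty=|z_j|>0$ the map $\lambda\mapsto -h_j(\lambda z/\|z\|_\infty)\,\|z\|_\infty/(\lambda z_j)$ is a Carath\'eodory-class function of one variable, so Harnack gives $\Re\bigl(h_j(z)/z_j\bigr)\leq -\tfrac{1-\|z\|_\infty}{1+\|z\|_\infty}$, whence $\|\phi_t(z)\|_\infty\to 0$ (locally uniformly, at rate comparable to $e^{-t}$). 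Then if $f(a)=f(b)$, the identity $f(\phi_t(\cdot))=e^{-t}f(\cdot)$ gives $f(\phi_t(a))=f(\phi_t(b))$ for all $t$; for $t$ large both points lie in a neighborhood of $0$ on which the locally biholomorphic $f$ is injective, so $\phi_t(a)=\phi_t(b)$, and injectivity of the time-$t$ flow map (uniqueness of trajectories of the autonomous ODE, which never leave $\D^n$) forces $a=b$. Without some such attraction-plus-flow-injectivity step, the proposed proof does not establish $f\in S^*(\D^n)$.
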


Loosely speaking, the class $S^0(\D^n)$ can be thought of as all mappings that can be written as an infinite composition of infinitesimal starlike mappings on 
$\D^n.$ This idea is made precise by using a differential equation involving the class $\M(\D^n).$\\

We define a \emph{Herglotz vector field} $G$ as a mapping $G:\D^n\times [0,\infty) \to \C^n$ with $G(\cdot,t)\in \M(\D^n)$ for all $t\geq 0$ such that $G(z,\cdot)$ is measurable on $[0,\infty)$ for all $z\in\D^n$. The corresponding Loewner equation is given by 
\begin{equation}\label{Loewner_ODE}
 \frac{\partial{\varphi_{s,t}(z)}}{\partial t}=G(\varphi_{s,t}(z),t) \quad \text{for almost all}\quad t\geq s, \quad \varphi_{s,s}(z)=z.
\end{equation}

The solution $t\mapsto \varphi_{s,t}$ is a family of univalent functions $\varphi_{s,t}:\D^n\to \D^n$ normalized by $\varphi_{s,t}(0)=0,$ $D\varphi_{s,t}(0)=e^{s-t}I_n$. 
The family $\{\varphi_{s,t}\}_{0\leq s \leq t}$ satisfies the algebraic property 
\begin{equation}\label{algebra}
\text{$\varphi_{s,t}=\varphi_{u,t}\circ\varphi_{s,u}$ for all $0\leq s\leq u\leq t$}
\end{equation} and is called an 
\emph{evolution family}.\\
This notion is closely related to Loewner chains. We define a \emph{normalized Loewner chain} on $\D^n$ as a family $\{f_t\}_{t\geq 0}$ of univalent mappings
 $f_t:\D^n\to \C^n$ with $f_t(0)=0, Df_t(0)=e^tI_n$ and $f_s(\D^n)\subseteq f_t(\D^n)$ for all $0\leq s\leq t.$

One can construct normalized Loewner chains from \eqref{Loewner_ODE} as follows. 

\begin{theorem}[\cite{MR1049182}]\label{Gabriela}
 Let $G(z,t)$ be a Herglotz vector field. For every $s\geq 0$ and $z\in \D^n,$ let $\varphi_{s,t}(z)$ be the solution of the initial value problem \eqref{Loewner_ODE}.
Then the limit \begin{equation}\label{represent}\lim_{t\to\infty}e^t\varphi_{s,t}(z)=:f_s(z)\end{equation}
exists for all $s\geq 0$ locally uniformly on $\D^n$ and $f_s \in S(\D^n)$.\\
Furthermore, the functions $\{f_t\}_{t\geq 0}$ satisfy $f_s(z)=f_t(\varphi_{s,t}(z))$ for all $z\in \D^n$ and $0\leq s\leq t,$ and $\{f_t\}_{t\geq 0}$ 
is a normalized Loewner chain having the property that $\{e^{-t}f_t\}_{t\geq0}$ is a normal family on $\D^n.$ Finally, $f_t$ satisfies the Loewner PDE
$$\frac{\partial{f_t(z)}}{\partial t}=-Df_t(z)G(z,t) \quad \text{for all}\quad z\in \D^n\quad \text{and for almost all }\quad t\geq 0.$$
\end{theorem}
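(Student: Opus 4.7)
The plan is to follow the classical Loewner-theoretic construction, adapted to the polydisc. First, treat \eqref{Loewner_ODE} as a Carath\'eodory ODE (measurable in $t$, holomorphic and hence locally Lipschitz in $z$) and invoke standard existence/uniqueness to get a unique local solution $\varphi_{s,t}(z)$ for each $z\in\D^n$. To extend to all $t\geq s$, observe that the polydisc norm is non-increasing along orbits: at a time where $\|\varphi\|_\infty=|\varphi_j|$,
\[
\tfrac{d}{dt}|\varphi_j|^2 = 2|\varphi_j|^2\,\Re\bigl(G_j(\varphi,t)/\varphi_j\bigr) \leq 0
\]
by the defining property of $\M(\D^n)$, so the solution stays in $\D^n$. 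The normalizations $\varphi_{s,t}(0)=0$ and $D\varphi_{s,t}(0)=e^{s-t}I_n$ follow from $G(0,t)=0$ and $DG(0,t)=-I_n$, and \eqref{algebra} is immediate from uniqueness.

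The technical heart of the proof is convergence of $e^t\varphi_{s,t}$. The key input is a quantitative growth/distortion bound for $\M(\D^n)$ — essentially an $n$-variable analogue of Carath\'eodory's lemma \eqref{Chris} — of the form $\|h(z)+z\|_\infty\leq\|z\|_\infty\Psi(\|z\|_\infty)$ with $\Psi(r)\to 0$ as $r\to 0$, uniformly for $h\in\M(\D^n)$. Combined with the exponential contraction $\|\varphi_{s,t}(z)\|_\infty\lesssim e^{-(t-s)}\|z\|_\infty$, which also comes out of such $\M(\D^n)$-bounds applied along orbits, the identity
\[
\tfrac{d}{dt}\bigl(e^t\varphi_{s,t}(z)\bigr) = e^t\bigl(G(\varphi_{s,t}(z),t)+\varphi_{s,t}(z)\bigr)
\]
shows that the right-hand side is integrable in $t$ on $[s,\infty)$. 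Hence $e^t\varphi_{s,t}$ converges locally uniformly to a holomorphic limit $f_s$ with $f_s(0)=0$ and $Df_s(0)=e^sI_n$, and an analogous argument uniform in $s$ gives normality of $\{e^{-t}f_t\}$.

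Each $e^t\varphi_{s,t}$ is univalent on $\D^n$ with non-degenerate Jacobian at the origin, so the several-variable Hurwitz theorem yields $f_s\in S(\D^n)$. Passing to the limit in $e^t\varphi_{s,t}(z)=e^t\varphi_{u,t}\bigl(\varphi_{s,u}(z)\bigr)$ produces $f_s=f_u\circ\varphi_{s,u}$, whence $f_s(\D^n)\subseteq f_u(\D^n)$ for $s\leq u$ and $\{f_t\}$ is a normalized Loewner chain. Differentiating this composition identity in $t$ gives
\[
0=\tfrac{\partial f_t}{\partial t}\bigl(\varphi_{s,t}(z)\bigr)+Df_t\bigl(\varphi_{s,t}(z)\bigr)\cdot G\bigl(\varphi_{s,t}(z),t\bigr),
\]
and noting that for any $w\in\D^n$ one can take $s=t$ with $z=w$ (since $\varphi_{t,t}=\mathrm{id}$) yields the Loewner PDE $\partial_t f_t(w)=-Df_t(w)G(w,t)$ for every $w\in\D^n$ and almost every $t\geq 0$.

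I expect the main obstacle to be the distortion estimate on $\M(\D^n)$. In one variable it is packaged cleanly via the Herglotz representation and \eqref{Chris}, but on the polydisc the condition $\Re(h_j(z)/z_j)\leq 0$ only when $\|z\|_\infty=|z_j|$ is subtler to exploit; one needs either a careful reduction to one-dimensional slices where $\|z\|_\infty$ is realized by a fixed coordinate, or a direct maximum-modulus argument on $\|\varphi_{s,t}\|_\infty$, to extract both the exponential decay driving convergence and the uniform bounds giving the normal family. Everything else is then driven by ODE uniqueness and the Hurwitz-type argument for univalence of $f_s$.
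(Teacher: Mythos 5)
The paper itself does not prove Theorem \ref{Gabriela}: it is quoted from Poreda \cite{MR1049182}, so there is no internal argument to compare with, and your outline has to be judged as a reconstruction of the classical proof. Structurally it is the standard construction and essentially sound, but, as you note yourself, everything hinges on uniform quantitative estimates for $\M(\D^n)$, which you only postulate. These do hold, and by exactly the slice reduction you hint at: if $\|z\|_\infty=|z_j|>0$, then $\lambda\mapsto -h_j(\lambda z/\|z\|_\infty)\,\|z\|_\infty/(\lambda z_j)$ is a Carath\'eodory function of $\lambda\in\D$ (it equals $1$ at $\lambda=0$, and along this slice the sup norm is always attained in the $j$-th coordinate, so its real part is nonnegative); evaluating at $\lambda=\|z\|_\infty$ gives both $\Re\bigl(-h_j(z)/z_j\bigr)\geq \frac{1-\|z\|_\infty}{1+\|z\|_\infty}$ and $|h_j(z)+z_j|\leq \frac{2\|z\|_\infty^2}{1-\|z\|_\infty}$. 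You genuinely need the first, strict bound: your monotonicity computation only shows that $\|\varphi_{s,t}(z)\|_\infty$ is non-increasing, not that it decays like $e^{-(t-s)}$. For indices $j$ at which the maximum is not attained, a bound on $|h_j(z)|$ follows from the maximum principle in the single variable $z_j$ (enlarge $|z_j|$ to $\max_k|z_k|$), and then a Schwarz-type lemma applied on slices to $h(z)+z$, which vanishes to order two at $0$ and is now uniformly bounded on $\|z\|_\infty\leq r$, yields the estimate $\|h(z)+z\|_\infty\leq\|z\|_\infty\Psi(\|z\|_\infty)$ for all components. With these lemmas in place, your integrability argument for $e^t\varphi_{s,t}$, the normal-family claim, and the Hurwitz/Jacobian argument for univalence of $f_s$ go through.

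The one step that is not correct as literally written is the passage to the Loewner PDE. The identity you differentiate holds for a.e.\ $t$ with an exceptional null set depending on $(s,z)$, and splitting the total derivative presupposes that $\partial f_t/\partial t$ exists at the point $\varphi_{s,t}(z)$ --- which is precisely what has to be proven --- so ``take $s=t$ afterwards'' is circular as stated. The standard repair is to show first that $t\mapsto f_t(w)$ is locally Lipschitz, locally uniformly in $w$ (using $f_t=f_{t+h}\circ\varphi_{t,t+h}$, the uniform bound on $G$, and Cauchy estimates), hence differentiable for a.e.\ $t$ with a null set independent of $w$, and then to compute the derivative from $f_t(w)-f_{t+h}(w)=f_{t+h}(\varphi_{t,t+h}(w))-f_{t+h}(w)$ together with $\varphi_{t,t+h}(w)-w=\int_t^{t+h}G(\varphi_{t,\tau}(w),\tau)\,d\tau$. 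Similarly, injectivity of $\varphi_{s,t}$ needs two-sided (backward) uniqueness for the Carath\'eodory ODE; this is fine because the uniform bound on $\M(\D^n)$ plus Cauchy estimates make $G(\cdot,t)$ locally Lipschitz uniformly in $t$, but it should be said. These are repairable technicalities rather than wrong ideas, yet in a complete proof they must be addressed.
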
 

The first element $f_0\in S(\D^n)$ of the Loewner chain in the theorem above is said to have \textit{parametric representation.}
\begin{definition}
 $S^0(\D^n):=\{f\in S(\D^n)\with \, f \; \text{has parametric representation}\}.$
\end{definition}

\begin{proposition}\label{daniel} Let $G$, $\varphi_{s,t}$ and $f_t$ be defined as in Theorem \ref{Gabriela}. 
 \begin{itemize}
  \item[a)] For all $0\leq s\leq t,$ $e^{-s}f_s \in S^0(\D^n)$ and $e^{t-s} \varphi_{s,t} \in S^0(\D^n).$
  \item[b)] $f\in S^0(\D^n)$ if and only if there exists a normalized Loewner chain $\{f_t\}_{t\geq0}$ with $f=f_0$ such that $\{e^{-t}f_t\}_{t\geq 0}$ is a normal family on $\D^n.$
\item[c)] $S^*(\D^n)\subset S^0(\D^n).$ 
 \end{itemize}
\end{proposition}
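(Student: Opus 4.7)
The plan is to prove the three parts in turn, using Theorem \ref{Gabriela} as a black box and exploiting time-shifts and concatenations of Herglotz vector fields.

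For part (a), to obtain $e^{-s}f_s\in S^0(\D^n)$ I would use the time-shifted Herglotz vector field $\tilde G(z,\tau):=G(z,s+\tau)$. Its evolution family satisfies $\tilde\varphi_{0,\tau}(z)=\varphi_{s,s+\tau}(z)$, so Theorem \ref{Gabriela} applied to $\tilde G$ yields the Loewner chain $\tilde f_0(z)=\lim_{\tau\to\infty}e^\tau\varphi_{s,s+\tau}(z)=e^{-s}f_s(z)$, which is therefore in $S^0(\D^n)$. For $e^{t-s}\varphi_{s,t}$, I would concatenate two Herglotz vector fields: on $[0,t-s]$ use $G(z,s+\tau)$, and on $[t-s,\infty)$ use the constant field $z\mapsto -z$ (which lies in $\M(\D^n)$ since $\Re(-z_j/z_j)=-1\le 0$). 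The solution on the second piece contracts by $e^{-(\tau-(t-s))}$, so the limit \eqref{represent} equals $e^{t-s}\varphi_{s,t}(z)$.

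For part (b), the forward direction is essentially the definition together with the normality conclusion in Theorem \ref{Gabriela}. The converse is the substantive step: starting from a normalized Loewner chain $\{f_t\}$ with $\{e^{-t}f_t\}$ normal, I would define the transition maps $\varphi_{s,t}:=f_t^{-1}\circ f_s$, which are univalent self-maps of $\D^n$ with $\varphi_{s,t}(0)=0$, $D\varphi_{s,t}(0)=e^{s-t}I_n$, satisfying \eqref{algebra}. The main obstacle is showing that $t\mapsto\varphi_{s,t}(z)$ is absolutely continuous and solves a Loewner ODE with a Herglotz vector field. I would derive local Lipschitz-in-$t$ estimates for $\{e^{-t}f_t\}$ on compacta from the inclusion $f_s(\D^n)\subseteq f_t(\D^n)$ combined with the normality assumption (a distortion-type argument), then set $G(z,t):=-(Df_t(z))^{-1}\,\partial_tf_t(z)$ a.e., and verify $G(\cdot,t)\in\M(\D^n)$ via a characterization analogous to Theorem \ref{Juergen} applied to the infinitesimal inclusions $f_s(\D^n)\subseteq f_t(\D^n)$. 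The limit identity in Theorem \ref{Gabriela} then reproduces $f=f_0$.

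Part (c) is a direct corollary of (b). Given $f\in S^*(\D^n)$, define $f_t(z):=e^tf(z)$. The normalizations $f_t(0)=0$, $Df_t(0)=e^tI_n$ are immediate, and for $0\le s\le t$ one has $f_s(\D^n)=e^sf(\D^n)\subseteq e^tf(\D^n)=f_t(\D^n)$ because $e^{s-t}\le 1$ and $f(\D^n)$ is starlike with respect to $0$. Thus $\{f_t\}$ is a normalized Loewner chain, and $\{e^{-t}f_t\}_{t\ge 0}=\{f\}$ is constant, hence trivially a normal family, so (b) gives $f\in S^0(\D^n)$.
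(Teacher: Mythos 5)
Your parts (a) and (c) are essentially the paper's own proof: for (a) the paper uses exactly the same time-shift of $G$ for $e^{-s}f_s$ and the same concatenation with the constant field $z\mapsto -z$ (whose flow contracts by $e^{-(\tau-(t-s))}$, making $e^\tau\psi_{0,\tau}$ eventually constant equal to $e^{t-s}\varphi_{s,t}$), and for (c) it likewise takes the chain $f_t=e^tf$, uses starlikeness to get the inclusions, and invokes (b). The only divergence is in (b): the paper does not prove it but cites \cite[Corollary 2.5]{GKK03}, whereas you sketch the converse directly via $\varphi_{s,t}:=f_t^{-1}\circ f_s$. Your sketch is the standard route and is in spirit what the cited result does, but be aware that the points you defer are exactly the substance of that corollary: establishing absolute continuity of $t\mapsto f_t$ on compacta, measurability of $G(z,t)=-(Df_t(z))^{-1}\partial_t f_t(z)$ and that $G(\cdot,t)\in\M(\D^n)$ a.e., and, crucially, the identification $\lim_{t\to\infty}e^t\varphi_{0,t}=f_0$. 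For this last step it is not enough to say that ``the limit identity in Theorem \ref{Gabriela} reproduces $f=f_0$'': Theorem \ref{Gabriela} only gives existence of some limit chain, and one must use the normal-family hypothesis (writing $f_0=e^t(e^{-t}f_t)(\varphi_{0,t})$ and expanding subsequential limits $h$ of $e^{-t}f_t$ with $h(0)=0$, $Dh(0)=I_n$) to conclude that this limit is $f_0$ itself; without normality the chain can differ from the canonical one generated by $G$. So your proposal is correct as an outline, but as a self-contained argument part (b) would need these details filled in, which the paper avoids by citation.
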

\begin{proof}${}$
  \begin{itemize}
  \item[a)] Define $H(z,\tau):=G(z,\tau+s)$ for $\tau\in [0,t-s]$ and $H(z,\tau)=-z$ for $\tau > t-s.$ Denote the solution of \eqref{Loewner_ODE} for the Herglotz vector field 
$H$ by $\psi_{s,t}.$ Then $\psi(0,\tau) = \psi(t-s,\tau) \circ \psi(0,t-s) = e^{-\tau+t-s} \cdot \psi_{0,t-s}=e^{-\tau+t-s} \cdot \varphi_{s,t}$ for $\tau > t-s$. 
Hence $e^{\tau}\psi(0,\tau)=e^{t-s} \cdot \varphi_{s,t} \to e^{t-s} \cdot \varphi_{s,t}$ as $\tau \to \infty.$ \\
Similarly, the mapping $e^{-s}f_s$ can be generated by the Herglotz vector field $H(z,t)=G(z,t+s).$
  \item[b)] See \cite[Corollary 2.5]{GKK03}.
  \item[c)] If $f\in S^*(\D^n)$, then $\{e^t f\}_{t\geq 0}$ is a normalized Loewner chain and we conclude from b) that $f \in S^0(\D^n).$ 
  The corresponding Herglotz vector field is constant w.r.t. time, i.e. $G(z,t) = -(Df(z))^{-1}\cdot f(z).$ 
 \end{itemize}

\end{proof}

Elements of the class $S^0(\D^n)$ enjoy the following inequalities, which are known as the Koebe distortion theorem when $n=1.$

\begin{theorem}[Theorem 1 and Theorem 2 in \cite{MR1049182}]\label{koebe2}
 If $f\in S^0(\D^n)$, then $$\frac{\|z\|_\infty}{(1+\|z\|_\infty)^2} \leq \|f(z)\|_\infty\leq \frac{\|z\|_\infty}{(1-\|z\|_\infty)^2} \quad \text{for all} \quad z\in \D^n. $$
In particular, $\frac1{4}\D^n \subseteq f(\D^n).$ (Koebe quarter theorem for the class $S^0(\D^n).$)
\end{theorem}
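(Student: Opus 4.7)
The plan is to transport the one-variable Koebe distortion estimates along the Loewner evolution. By Theorem \ref{Gabriela}, write $f=f_0$ with $f(z)=\lim_{t\to\infty}e^{t}\varphi_{0,t}(z)$ for an evolution family driven by some Herglotz vector field $G(\cdot,t)\in\M(\D^n)$. Fix $z\in\D^n\setminus\{0\}$, set $w(t):=\varphi_{0,t}(z)$ and $\rho(t):=\|w(t)\|_\infty$; then $e^{t}w(t)\to f(z)$ and hence $e^{t}\rho(t)\to\|f(z)\|_\infty$. It therefore suffices to sandwich $e^{t}\rho(t)$ between the prescribed functions of $\rho(0)=\|z\|_\infty$.

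The key ingredient is a Carath\'eodory-type pointwise inequality for $h\in\M(\D^n)$: if $\|z\|_\infty=|z_j|>0$, then
$$
\frac{1-\|z\|_\infty}{1+\|z\|_\infty}\leq\Re\!\left(-\frac{h_j(z)}{z_j}\right)\leq\left|\frac{h_j(z)}{z_j}\right|\leq\frac{1+\|z\|_\infty}{1-\|z\|_\infty}.
$$
I would derive this by slicing. Let $u:=z/\|z\|_\infty$ (so $\|u\|_\infty=1$) and set $\psi(\lambda):=-h_j(\lambda u)/(\lambda u_j)$ for $\lambda\in\D\setminus\{0\}$. The normalization $Dh(0)=-I_n$ extends $\psi$ holomorphically across $0$ with $\psi(0)=1$; and for every nonzero $\lambda\in\D$ the $j$-th coordinate of $\lambda u$ realizes the max-norm $|\lambda|$, so the defining property of $\M(\D^n)$ gives $\Re\psi(\lambda)\geq 0$. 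Thus $\psi\in\Po$, and the Herglotz representation recalled in Remark \ref{dim_one} yields the standard bounds $\tfrac{1-|\lambda|}{1+|\lambda|}\leq\Re\psi(\lambda)\leq|\psi(\lambda)|\leq\tfrac{1+|\lambda|}{1-|\lambda|}$; evaluating at $\lambda=\|z\|_\infty$ gives the claim.

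Since $w$ is locally Lipschitz so is $\rho$; at a differentiability point $t$ of $\rho$, choosing any $j$ with $|w_j(t)|=\rho(t)$ and comparing $\rho(s)^2\geq|w_j(s)|^2$ with equality at $s=t$ forces
$$
2\rho(t)\dot\rho(t)=\frac{d}{dt}|w_j(t)|^2=-2|w_j(t)|^2\,\Re\!\left(-\frac{G_j(w(t),t)}{w_j(t)}\right),
$$
and feeding in the pointwise inequality gives the ODE sandwich
$$
-\rho(t)\cdot\frac{1+\rho(t)}{1-\rho(t)}\leq\dot\rho(t)\leq-\rho(t)\cdot\frac{1-\rho(t)}{1+\rho(t)}.
$$
The partial fractions $\tfrac{1\mp\rho}{\rho(1\pm\rho)}=\tfrac{1}{\rho}\mp\tfrac{2}{1\pm\rho}$ translate this into $t\mapsto\tfrac{e^{t}\rho(t)}{(1+\rho(t))^2}$ being nondecreasing and $t\mapsto\tfrac{e^{t}\rho(t)}{(1-\rho(t))^2}$ nonincreasing; sending $t\to\infty$, so that $\rho(t)\to 0$, converts these monotonicities directly into the two sides of the distortion estimate.

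For the Koebe quarter statement, fix $w\in\D^n$ with $\|w\|_\infty<1/4$ and look at the open set $S:=\{t\in[0,1]\colon tw\in f(\D^n)\}\ni 0$. If $S\neq[0,1]$, pick $T:=\min([0,1]\setminus S)$; openness of $f(\D^n)$ and $sw\to Tw$ along $s\in S$, $s\nearrow T$, give $Tw\in\partial f(\D^n)$, hence there is a sequence $z_k\in\D^n$ with $f(z_k)\to Tw$ and necessarily $\|z_k\|_\infty\to 1$ (otherwise a subsequential limit would place $Tw$ inside $f(\D^n)$), and the lower distortion bound then forces $\|Tw\|_\infty\geq 1/4>\|w\|_\infty\geq\|Tw\|_\infty$, contradiction. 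I expect the Carath\'eodory-type slice inequality to be the only non-routine step; once it is available, the remaining ODE comparison and topological exhaustion are standard.
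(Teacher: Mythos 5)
The paper offers no proof of Theorem \ref{koebe2}: it is imported verbatim from Poreda's paper \cite{MR1049182}, so there is no internal argument to compare yours against. Your proof is correct and is, in substance, the classical route to the cited result: slicing with $u=z/\|z\|_\infty$ and $\psi(\lambda)=-h_j(\lambda u)/(\lambda u_j)$ reduces the generator estimate to Harnack-type bounds in the Carath\'eodory class, the resulting differential inequality for $\rho(t)=\|\varphi_{0,t}(z)\|_\infty$ makes $e^t\rho(t)/(1+\rho(t))^2$ nondecreasing and $e^t\rho(t)/(1-\rho(t))^2$ nonincreasing, and letting $t\to\infty$ together with your connectedness argument gives both the distortion bounds and the quarter theorem. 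Two routine points deserve an explicit line in a final write-up: the definition of $\M(\D^n)$ only yields $\Re\psi\geq 0$, while the paper's class $\mathcal{P}$ requires strict positivity, so invoke the minimum principle (or Harnack's inequality directly, which is all you use) before quoting the bounds of Remark \ref{dim_one}; and the passage from the almost-everywhere differential inequality to monotonicity needs local absolute continuity of $\rho$, which is automatic since $t\mapsto\varphi_{0,t}(z)$ is locally absolutely continuous by definition of a solution of \eqref{Loewner_ODE} and a maximum of finitely many absolutely continuous functions $|w_j|$ is again absolutely continuous, so your ``locally Lipschitz'' shortcut is harmless.
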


This can be used to prove:

\begin{theorem}[Theorem 2.9 in \cite{GKK03}]
 The class $S^0(\D^n)$ is compact.
\end{theorem}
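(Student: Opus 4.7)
The plan is to verify compactness by showing (i) $S^0(\D^n)$ is a normal family, i.e.\ relatively compact in the topology of locally uniform convergence, and (ii) $S^0(\D^n)$ is closed under locally uniform limits. For (i), the upper estimate in Theorem \ref{koebe2} gives $\|f(z)\|_\infty \leq \frac{\|z\|_\infty}{(1-\|z\|_\infty)^2}$ uniformly in $f\in S^0(\D^n)$, so the family is locally uniformly bounded on $\D^n$ and Montel's theorem in several variables applies.

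For (ii), take $f^{(k)}\in S^0(\D^n)$ with $f^{(k)}\to f$ locally uniformly. The normalizations $f(0)=0$ and $Df(0)=I_n$ pass to the limit. Since each $f^{(k)}$ is univalent and the limit has invertible derivative at $0$, the several-variable analogue of Hurwitz's theorem forces $f$ to be univalent, so $f\in S(\D^n)$. It remains to supply $f$ with the structure from Proposition \ref{daniel}(b), namely a normalized Loewner chain $\{f_t\}_{t\geq 0}$ with $f_0=f$ such that $\{e^{-t}f_t\}_{t\geq 0}$ is normal.

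To do this, fix for each $k$ a chain $\{f_t^{(k)}\}$ witnessing $f^{(k)}\in S^0(\D^n)$. By Proposition \ref{daniel}(a), each $e^{-t}f_t^{(k)}$ lies in $S^0(\D^n)$, hence Theorem \ref{koebe2} yields the distortion bound uniformly in both $k$ and $t$. Fix a countable dense subset $\{t_j\}_{j\in \N}\subset [0,\infty)$ containing $0$, and use a diagonal argument: at each $t_j$ the family $\{e^{-t_j}f_{t_j}^{(k)}\}_k$ is normal, so after passing to a subsequence (still denoted by $k$) we obtain holomorphic limits $g_{t_j}$ locally uniformly. Setting $f_{t_j}:=e^{t_j}g_{t_j}$ we get $f_0=f$, and the inclusions $f_s^{(k)}(\D^n)\subseteq f_t^{(k)}(\D^n)$ together with the identity $f_s^{(k)} = f_t^{(k)}\circ\varphi_{s,t}^{(k)}$ with $\varphi_{s,t}^{(k)}$ bounded by $1$ pass to the limit on the dense set. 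Extending $t\mapsto f_t$ to all $t\geq 0$ by the monotonicity of the nested images (taking limits of $f_{t_j}$ along $t_j\downarrow t$ from the dense set, which exist by normality) gives a normalized Loewner chain with $\{e^{-t}f_t\}$ normal, so $f\in S^0(\D^n)$ by Proposition \ref{daniel}(b).

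The main technical obstacle is the closedness step: one must show that the subsequential limits of $\{f_t^{(k)}\}$ assemble into an honest Loewner chain, i.e.\ that the limiting $f_t$ are all univalent, that the inclusion property $f_s(\D^n)\subseteq f_t(\D^n)$ survives the limit on the full interval $[0,\infty)$, and that $Df_t(0)=e^t I_n$. Univalence at each $t$ follows from Hurwitz once the correct derivative at $0$ is established, and the derivative and inclusion properties require care in extending from the dense set to all $t$, relying on the uniform distortion bound to control the extension.
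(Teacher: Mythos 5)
Your proposal is essentially the standard proof, and it is sound; note, however, that the paper itself does not prove this statement at all --- it is quoted from \cite{GKK03} (Theorem 2.9), with the preceding sentence only indicating that the growth estimate of Theorem \ref{koebe2} enters the argument. Your two-step scheme (local uniform boundedness from the distortion bound, hence normality; closedness via assembling a limit Loewner chain from witnesses for the $f^{(k)}$ and invoking Proposition \ref{daniel}(b)) is exactly the route taken in the cited source, so there is nothing genuinely different to compare. A few points deserve the care you partly acknowledge, but all are available: (1) the several-variables ``Hurwitz'' statement you need is the dichotomy that a locally uniform limit of univalent maps is either univalent or has $\det Df\equiv 0$; the normalization $Df(0)=I_n$ (resp.\ $Df_{t_j}(0)=e^{t_j}I_n$ at the intermediate times) excludes the degenerate branch, which is precisely why the argument works at every level of the chain. (2) When passing to the limit in $f_s^{(k)}=f_t^{(k)}\circ\varphi_{s,t}^{(k)}$, ``bounded by $1$'' is not quite enough as stated; you should use the Schwarz-type bound $\|\varphi_{s,t}^{(k)}(z)\|_\infty\leq\|z\|_\infty$ (or the maximum principle together with $\varphi_{s,t}^{(k)}(0)=0$) to see that the limit transition maps send $\D^n$ into the open polydisc, so that the composition identity, and hence the inclusion of images, survives. (3) For the extension from the dense set of times to all $t\geq 0$, subsequential limits along $t_j\downarrow t$ suffice: the paper's definition of a normalized Loewner chain demands no continuity in $t$, so one may simply fix such a limit as $f_t$ and verify $Df_t(0)=e^tI_n$, univalence, and the inclusions by sandwiching with dense times, exactly as you indicate. (4) A small formal point: your witnesses $\{f_t^{(k)}\}$ come from Proposition \ref{daniel}(b), not necessarily from a Herglotz vector field, so to conclude $e^{-t}f_t^{(k)}\in S^0(\D^n)$ it is cleaner to apply Proposition \ref{daniel}(b) to the time-shifted chain $\{e^{-t}f_{t+u}^{(k)}\}_{u\geq 0}$ rather than cite part (a) verbatim. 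None of these is a gap in the idea; they are the routine details one must write out.
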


\begin{remark} In one dimension, we have $S^0(\D)=S(\D)$ (\cite[Theorem 6.1]{Pom:1975}), which cannot be true in higher dimensions as 
$S(\D^n)$ is not compact for $n\geq 2$. \\
 There is a somehow geometric property for domains related to $S^0(\D^n),$ called \emph{asymptotic starlikeness}. This notion was introduced by Poreda in \cite{MR1049183}. He showed that this property is a necessary condition for a domain to be the image of a function $f\in S^0(\D^n).$ Under some further assumptions this condition is also sufficient. In \cite[Theorem 3.1]{MR2425737}, it is shown that $f:\B_n\to\C^n$ has parametric representation on the unit ball if and only if $f$ is univalent, normalized, and $f(\B_n)$ is an asymptotically starlike domain. 
\end{remark}

We summarize some further properties of the class $S^0(\D^n)$. Property $b)$ 
will be essential for the proof of Theorem \ref{sup}.

\begin{theorem}\label{range}
Let $f\in S^0(\D^n)$ and let $\{f_t\}_{t\geq0}$ be a normalized Loewner chain with $f=f_0$ such that $\{e^{-t}f_t\}_{t\geq0}$ is a normal family. Then
\begin{itemize}
 \item[a)] $\bigcup_{t\geq0}f_t(\D^n)=\C^n$. 
\item[b)] $f(\D^n)$ is a Runge domain.
\item[c)] For $n\geq 2$, $S^*(\D^n)\cap \operatorname{Aut}(\C^n)$ is dense in $S^*(\D^n)$ and $S^0(\D^n)\cap \operatorname{Aut}(\C^n)$ is dense in $S^0(\D^n)$.
\end{itemize}
\end{theorem}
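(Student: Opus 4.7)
Part (a) is a quick corollary of Proposition~\ref{daniel}(a) together with Theorem~\ref{koebe2}. By the former, $e^{-t}f_t\in S^0(\D^n)$ for every $t\geq 0$; by the latter we then have $\tfrac{1}{4}\D^n\subseteq(e^{-t}f_t)(\D^n)$, i.e.\ $\tfrac{e^t}{4}\D^n\subseteq f_t(\D^n)$. Since these sets are nested in $t$ and already exhaust $\C^n$, the union equals $\C^n$.

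Part (b) is where the real work lies and where I expect the main obstacle to be. The strategy is to use the exhaustion from (a) to establish polynomial convexity of every compact $K\subset f(\D^n)=f_0(\D^n)$; by Oka--Weil this yields the Runge property. The plan is to exploit the representation $f_0=\lim_{t\to\infty}e^t\varphi_{0,t}$ from Theorem~\ref{Gabriela}, combined with the tower $f_0(\D^n)\subset f_t(\D^n)\nearrow\C^n$, to transfer Taylor-polynomial approximations of $h\circ f_0$ (which are automatic on $\D^n$ for any $h\in\Ho(f_0(\D^n),\C)$) into polynomial approximations of $h$ in the target variable, using each $f_t$ as a ``near-polynomial'' model of $f_0^{-1}$ on the relevant compact set. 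Adapting the corresponding argument from the unit-ball setting (developed in \cite{MR2507634} and the surrounding literature) ought to deliver (b); should a direct adaptation not be clean, the result is in any case available in \cite{GHK} mentioned in the paper's Update. The subtle point is that the polydisc is not strictly pseudoconvex, so some ball-specific estimates would need to be replaced by polydisc analogues.

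Part (c) follows from (b) by an application of the Andersen--Lempert theorem, which for $n\geq 2$ guarantees that a biholomorphism of $\D^n$ onto a Runge open subset of $\C^n$ is the locally uniform limit of restrictions of elements of $\aut(\C^n)$. Given $f\in S^0(\D^n)$, (b) makes $f(\D^n)$ Runge, so there exist $F_k\in\aut(\C^n)$ with $F_k|_{\D^n}\to f$ uniformly on compacts of $\D^n$. Composing with affine maps $A_k\to I_n$ chosen so that $(A_k\circ F_k)(0)=0$ and $D(A_k\circ F_k)(0)=I_n$ produces normalized automorphisms $\tilde F_k\in\aut(\C^n)$ still converging to $f$. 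The remaining subtlety is to verify that $\tilde F_k|_{\D^n}$ actually lies in $S^0(\D^n)$: for this one exhibits a normalized Loewner chain whose first element is $\tilde F_k|_{\D^n}$ and whose renormalization $\{e^{-t}f_t\}$ is normal, and then invokes Proposition~\ref{daniel}(b); the Runge structure of $\tilde F_k(\D^n)$ together with the automorphism property of $\tilde F_k$ on $\C^n$ makes the construction of such a chain feasible. The same scheme, restricted to starlike approximants, yields density of $S^*(\D^n)\cap\aut(\C^n)$ in $S^*(\D^n)$.
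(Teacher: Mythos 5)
Part (a) of your plan coincides with the paper's argument and is fine. For part (b), however, you do not actually give a proof: the polynomial-convexity/Oka--Weil plan with each $f_t$ serving as a ``near-polynomial model of $f_0^{-1}$'' is left as a hope, hedged by ``ought to deliver'' and by an appeal to \cite{GHK}, which is a deferral rather than an argument (and \cite{MR2507634} is not where such a Runge argument is developed). The missing idea is that (a) already puts you in a standard situation: $f(\D^n)=f_0(\D^n)$ is the initial member of the increasing family $\{f_t(\D^n)\}_{t\geq 0}$ of univalent images of $\D^n$ exhausting $\C^n$, which is precisely the ``semicontinuous holomorphic extendability'' of Docquier--Grauert; their Satz 19 \cite{MR0148939} (see also \cite[Theorem 4.2]{ABWold}) then gives directly that $f(\D^n)$ is Runge in $\C^n$. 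No strict-pseudoconvexity issue or ball-versus-polydisc estimate enters at any point.

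For part (c) there is a genuine gap at exactly the step you call a ``remaining subtlety''. After normalizing the Anders\'en--Lempert approximants, there is no reason whatsoever that $\tilde F_k|_{\D^n}\in S^0(\D^n)$: being a normalized restriction of an automorphism of $\C^n$ with Runge image does not imply parametric representation. For instance, the shears $(z_1,z_2)\mapsto(z_1+cz_2^2,z_2)$ are normalized automorphisms of $\C^2$ whose restrictions to $\D^2$ have Runge image, yet for $|c|$ large they violate the growth estimate of Theorem \ref{koebe2} (test at $z=(0,1/2)$) and hence lie outside $S^0(\D^2)$; recall also $S(\D^n)\neq S^0(\D^n)$. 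Your proposal never constructs the required Loewner chain, and the assertion that Runge image plus the automorphism property ``makes the construction feasible'' is unsupported; likewise Anders\'en--Lempert produces no starlike approximants, so ``the same scheme restricted to starlike approximants'' has no content. The paper's proof is built to avoid this trap: for $f\in S^*(\D^n)$ it approximates the dilations $f_r(z)=\frac1r f(rz)$, for which the inequality defining $\M(\D^n)$ (via Theorem \ref{Juergen}) is stable under uniform perturbation on $\overline{\D^n}$, so the normalized approximants of $f_r$ are themselves starlike for $k$ large, and a diagonal argument in $r\to 1$ concludes; for general $f\in S^0(\D^n)$ it does not approximate $f$ directly at all, but writes $f=\lim_{t\to\infty}e^t\varphi_{0,t}$, approximates the Herglotz vector field by piecewise constant fields whose values are $-(Dg)^{-1}g$ with $g\in S^*(\D^n)\cap\aut(\C^n)$, and notes that the resulting time-$T$ maps are compositions of automorphisms which belong to $S^0(\D^n)$ automatically by Proposition \ref{daniel}(a). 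Some argument of this kind is needed; as written, your part (c) does not go through.
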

Here, we don't distinguish between $f\in \aut(\C^n)$ and its restriction $f|_{\D^n}$ to $\D^n$ to simplify notation.

\begin{proof}
a) Proposition \ref{daniel} a) and Theorem  \ref{koebe2} imply $$\bigcup_{t\geq 0}f_t(\D^n)\supseteq \bigcup_{t\geq 0}\left(\frac{e^t}{4}\cdot \D^n\right)=\C^n.$$ 
b) Consequently, the Loewner chain $\{f_t\}_{t\geq 0}$ extends $f(\D^n)$ to the Runge domain $\C^n.$ This is a special case of the ``semicontinuous 
holomorphic extendability'' (to $\C^n$) defined  in \cite{MR0148939} by Docquier and Grauert. They proved that this impies that $f(\D^n)$ is a Runge domain; 
see \cite[Satz 19]{MR0148939}. We also refer to \cite[Theorem 4.2]{ABWold} for an English reference.\\

c) We start with the case $f\in S^*(\D^n).$ As $f$ maps $\D^n$ onto a Runge domain, it can be approximated locally uniformly on $\D^n$ by a sequence  
$(g_{k})_k\subset  \aut(\C^n),$ see Theorem 2.1 in \cite{MR1185588}. We may assume that $g_k(0)=0$ and $Dg_k(0)=I_n$. \\
Now we also have $f_r:=\frac1{r}f(rz)\in S^*(\D^n)$ for every $r\in (0,1)$ and $g_{k,r}:=\frac1{r}g_{k}(rz)$ converges uniformly on $\D^n$ to $f_r$ as $k\to\infty.$ 
We have $-(Df_{r})^{-1}\cdot f_{r}\in \M(\D^n)$, and thus $-(Dg_{k,r})^{-1}\cdot g_{k,r}\in \M(\D^n)$ 
 for all $k$ large enough, say $k\geq K_r.$ Hence $g_{k,r}\in S^*(\D^n)$ for all $k\geq K_r$.\\
Consequently, the sequence $(g_{K_{r_m},r_m})_{m}$, with $r_m=1-1/m$, belongs to $S^*(\D^n)\cap \operatorname{Aut}(\C^n)$ and converges locally uniformly on $\D^n$ to $f.$\\  

Next let $f$ be an arbitrary mapping from $S^0(\D^n).$ Then $f= \lim_{t\to\infty}e^t \varphi_{0,t}$ where $\varphi_{0,t}$ is a solution to 
\eqref{Loewner_ODE} with a Herglotz vector field $G$. So it suffices to approximate $e^T\varphi_{0,T}$ for every 
$T>0$ by automorphisms of $\aut(\C^n)$ that belong to 
$S^0(\D^n)$.\\
First, we approximate $G$ by a sequence of piecewise constant Herglotz vector fields $G_k$ such that the corresponding solution 
$\varphi^k_{0,T}$ of \eqref{Loewner_ODE} for $G_k$ at time $t=T>0$ converges locally uniformly on $\D^n$ to $\varphi_{0,T}$ as $k\to\infty$. \\
We can further assume that every constant has the form 
$-(Dg)^{-1}\cdot g$ for some $g\in \aut(\C^n)\cap S^*(\D^n).$ Due to property \eqref{algebra}, the mapping $\varphi^k_{0,T}$ is a composition of 
automorphisms of $\C^n$, so $\varphi^k_{0,T}\in \aut(\C^n).$ With Proposition \ref{daniel} a), we conclude that $e^T \varphi^k_{0,T}\in S^0(\D^n)\cap \aut(\C^n)$.
\end{proof}

\section{Extreme and support points of \texorpdfstring{$S^0(\D^n)$}{}} \label{sec_3}

Let $X$ be a locally convex $\C$--vector space and $E\subset X.$ The set $\ex E$ of extreme points  and the set $\supp E$ of support points  of $E$ are defined as follows:
\begin{itemize}
\item[\textbullet] $x\in \ex E$ if the representation $x=ta + (1-t)b$ with $t\in[0,1],$ $a,b\in E$,  always implies $x=a=b.$
\item[\textbullet] $x\in \supp E$ if there exists a continuous linear functional $L:X\to\C$ such that $\Re L$ is non-constant on $E$ and $$\Re L(x) = \max_{y\in E}\Re L(y).$$
\end{itemize}

The class $S^0(\D^n)$ is a nonempty compact subset of the locally convex vector space $\Ho(\D^n, \C^n).$ Thus the Krein--Milman theorem 
implies that $\ex S^0(\D^n)$ is nonempty. Of course, $\supp S^0(\D^n)$ is nonempty too: Let $f=(f_1,...,f_n)\in \Ho(\D^n,\C^n)$,
then the evaluation $L(f)=f_1(z_0)$, $z_0\in \D^n\setminus\{0\},$ is an example for a continuous linear functional on $\Ho(\D^n,\C^n)$ such that $\Re L$ 
is non-constant on $S^0(\D^n).$

\begin{remark} Let $f\in \supp S^0(\D^n)$ be generated by the Herglotz vector field $G.$ Then, 
for a.e. $t\geq 0,$ $G(\cdot,t)\in \supp \M(\D^n).$ This is a consequence of Pontryain's maximum principle, see 
\cite[Theorem 1.5]{pont}. We have 
$$ \supp \M(\D) =  \left\{-z\sum_{k=1}^m \lambda_k \frac{e^{i\alpha_k}+z}{e^{i\alpha_k}-z}\,|\,
m\in\N, \alpha_k \in \R, \lambda_k\geq 0, \sum_{k=1}^m\lambda_k=1\right\}, $$
see \cite[Theorem 1]{hm1983}. By using the Herglotz representation for the class $\mathcal{P}$, one obtains
$$\ex \M(\D) = \left\{-z\frac{e^{i\alpha}+z}{e^{i\alpha}-z}\,|\, \alpha \in \R\right\}.$$
There are no such formulas for the higher dimensional case. However, Voda 
obtained that mappings of the form 
$h(z)=-(z_1 p_1(z_{j_1}), ..., z_np_n(z_{j_n}))$ are extreme points of $\M(\D^n)$ (see \cite[Prop. 2.2.1]{Voda}), 
where each  $p_k$ has the form $p_k(z)=\frac{e^{i\alpha_k}+z}{e^{i\alpha_k}-z}$ for some 
 $\alpha_k\in \R.$\\
He also notes (\cite[p. 55]{Voda}) that 
there must be extreme points of $\M(\D^n)$ not having this form. 
\end{remark}

\begin{remark}
Assume that a generator $M\in\M(\D^n)$ has the special form $$M(z) = -p(z) \cdot z.$$ Then $p:\D^n \to \C$ has to map $0$ to $1$ and $\Re(p(z))>0$ for all $z\in \D^n.$ The set of all those 
generators forms a convex and compact subset of $\M(\D^n).$ There is a Herglotz representation for $p$ via 
certain measures on $(\partial \D)^n,$ see \cite{MR0674279, MR873021}. \\ However, also in this case, it seems to be rather difficult to
 determine extreme points of this class for $n\geq 2$. In \cite{MR1017849}, it is shown that there exists an extreme 
 point whose corresponding measure on $(\partial \D)^n$ is absolutely continuous when $n\geq 2$, in contrast to the extreme points 
 for the case $n=1$, which all correspond to point measures on $\partial \D.$
\end{remark}

Extreme points as well as support points of the class $S^0(\D)$ map $\D$ onto $\C$ minus a slit (which has increasing modulus when one 
runs through the slit from its starting point to $\infty$), see \cite[\S9.4-\S9.5]{MR708494}. 
In particular, they are unbounded mappings. It would be interesting to find similar geometric properties of
extreme and support points of $S^0(\D^n)$ when $n\geq 2.$ In this section, we prove the following statements concerning 
support and extreme points of $S^0(\D^n).$

\begin{theorem}\label{sup} 
Let $f\in\supp S^0(\D^n)$ and let $\{f_t\}_{t\geq0}$ be a normalized Loewner chain with $f_0=f$ such that $\{e^{-t}f_t\}_{t\geq 0}$ is a normal family on $\D^n,$ then $e^{-t}f_t\in\supp S^0(\D^n)$ for all $t\geq 0.$ 
\end{theorem}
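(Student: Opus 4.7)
I would mimic the one-variable Pommerenke argument by building, from the given supporting functional $L$ of $f$, a candidate supporting functional $L_t$ for $\tilde f_t:=e^{-t}f_t$. Let $L:\Ho(\D^n,\C^n)\to\C$ be continuous and $\C$-linear with $\Re L(f)=\max_{g\in S^0(\D^n)}\Re L(g)$ and $\Re L$ non-constant on $S^0(\D^n)$, and let $G$ be the Herglotz vector field generating the chain $\{f_t\}$ via Theorem \ref{Gabriela}. Set
\[L_t(h):=L\bigl(e^{t}\,h\circ\varphi_{0,t}\bigr),\qquad h\in\Ho(\D^n,\C^n).\]
Since $\varphi_{0,t}(\D^n)\subseteq\D^n$, this is a well-defined continuous $\C$-linear functional. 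With $\Phi_t(h):=e^{t}\,h\circ\varphi_{0,t}$, the identity $f=f_t\circ\varphi_{0,t}=\Phi_t(\tilde f_t)$ immediately gives $L_t(\tilde f_t)=L(f)$.

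The first real step is to show that $\Phi_t$ maps $S^0(\D^n)$ into itself. Given $h\in S^0(\D^n)$ with generating Herglotz vector field $G_h$ (Theorem \ref{Gabriela}), splice
\[H(z,s):=\begin{cases}G(z,s),& s\in[0,t],\\ G_h(z,s-t),& s>t,\end{cases}\]
which is again a Herglotz vector field. By the algebraic property \eqref{algebra}, the associated evolution family $\chi_{0,s}$ factors for $s>t$ as $\chi_{0,s}=\psi^{h}_{0,s-t}\circ\varphi_{0,t}$, where $\psi^{h}$ denotes the evolution family of $G_h$. The limit \eqref{represent} then yields $\lim_{s\to\infty}e^{s}\chi_{0,s}=e^{t}\,h\circ\varphi_{0,t}=\Phi_t(h)$, so $\Phi_t(h)\in S^0(\D^n)$. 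Consequently, for every $h\in S^0(\D^n)$,
\[\Re L_t(h)=\Re L(\Phi_t(h))\le\max_{g\in S^0(\D^n)}\Re L(g)=\Re L(f)=\Re L_t(\tilde f_t),\]
so $\tilde f_t$ maximizes $\Re L_t$ on $S^0(\D^n)$.

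The main obstacle will be verifying that $\Re L_t$ is non-constant on $S^0(\D^n)$, for only then is $L_t$ a supporting functional. I would argue by contradiction: assume $\Re L_t\equiv\Re L(f)$ on $S^0(\D^n)$; then $\Phi_t(S^0(\D^n))$ is contained in the proper subset $F:=\{g\in S^0(\D^n):\Re L(g)=\max_{S^0(\D^n)}\Re L\}$. This is precisely where the Runge property of $f(\D^n)$ from Theorem \ref{range}(b), announced as essential, should enter, together with its consequence Theorem \ref{range}(c) (density of $S^0(\D^n)\cap\aut(\C^n)$ in $S^0(\D^n)$): approximating a target $g_0\in S^0(\D^n)$ with $\Re L(g_0)<\max_{S^0(\D^n)}\Re L$ by automorphisms of $\C^n$ should provide enough flexibility to realize $g_0$ as a limit of elements $\Phi_t(h_k)$ with $h_k\in S^0(\D^n)$, contradicting the supposed constancy of $\Re L$ on $\Phi_t(S^0(\D^n))$. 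Once non-constancy is secured, $L_t$ supports $\tilde f_t$ in $S^0(\D^n)$ and the theorem follows. I expect this density/approximation step, rather than the mechanical identities for $L_t$, to be the substantive part of the proof.
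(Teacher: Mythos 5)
Your setup coincides with the paper's own: the functional $L_t(h)=L(e^{t}\,h\circ\varphi_{0,t})$ is exactly the functional $J$ used there, your splicing construction showing that $\Phi_t:h\mapsto e^{t}h\circ\varphi_{0,t}$ maps $S^0(\D^n)$ into itself is precisely Lemma \ref{s0}, and the identities $L_t(e^{-t}f_t)=L(f)$ and $\Re L_t(h)\le \Re L_t(e^{-t}f_t)$ for $h\in S^0(\D^n)$ are correct. The genuine gap is the step you defer: non-constancy of $\Re L_t$ on $S^0(\D^n)$, and the route you sketch for it does not work. You would need to realize some $g_0\in S^0(\D^n)$ with $\Re L(g_0)<\max_{S^0(\D^n)}\Re L$ as a limit of maps $\Phi_t(h_k)$ with $h_k\in S^0(\D^n)$. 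Producing such $h_k$ amounts to taking $h_k\approx e^{-t}g_0\circ\varphi_{0,t}^{-1}$, but $\varphi_{0,t}^{-1}$ is defined only on $\varphi_{0,t}(\D^n)\subsetneq\D^n$, and neither the Runge property of $f(\D^n)$ nor the density of $S^0(\D^n)\cap\aut(\C^n)$ provides an extension of this map to an element of $S^0(\D^n)$. In fact $\overline{\Phi_t(S^0(\D^n))}$ is genuinely much smaller than $S^0(\D^n)$: the degree-$2$ Taylor coefficients of $\Phi_t(h)$ equal a fixed vector (coming from $\varphi_{0,t}$) plus $e^{-t}$ times the corresponding coefficients of $h$, so the range of any degree-$2$ coefficient functional over $\Phi_t(S^0(\D^n))$ has diameter $e^{-t}$ times its diameter over $S^0(\D^n)$; hence for $t>0$ many elements of $S^0(\D^n)$ cannot be approximated by $\Phi_t(h_k)$ at all, and nothing in your argument rules out that all $g$ with $\Re L(g)<\max$ are among them.

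What actually closes the gap -- and is the substantive part of the paper's proof -- requires only the single test function $h=\operatorname{id}$: one proves the strict inequality $\Re L(e^{t}\varphi_{0,t})<\Re L(f)$, i.e. $\Re L_t(\operatorname{id})<\Re L_t(e^{-t}f_t)$, which already gives non-constancy. This follows from Proposition \ref{marina}: no map of the form $e^{t}\varphi_{0,t}$ is a support point of $S^0(\D^n)$. The mechanism behind that proposition is the shearing Lemma \ref{pol}: for a polynomial $P$ with $P(0)=0$ and $DP(0)=0$, the map $z\mapsto z+\eps P(z)$ is starlike for small $|\eps|$, hence $e^{t}\varphi_{0,t}+\eps e^{t}P(\varphi_{0,t})\in S^0(\D^n)$; if $e^{t}\varphi_{0,t}$ maximized $\Re L$, this perturbation would force $\Re L(P(\varphi_{0,t}))=0$ for every such $P$, and then the Runge property of $\varphi_{0,t}(\D^n)$ (Theorem \ref{range} b)) allows one to approximate every $g\in\Ho(\D^n,\C^n)$ with $g(0)=0$, $Dg(0)=0$ by polynomials in $\varphi_{0,t}$, so $\Re L$ would be constant on $S(\D^n)$, a contradiction. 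Thus the Runge property enters through polynomial approximation on $\varphi_{0,t}(\D^n)$, not through density of automorphisms (Theorem \ref{range} c) is not needed here). To repair your proof, replace the density heuristic by this perturbation argument, or simply invoke Proposition \ref{marina} at the point where you need non-constancy.
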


\begin{theorem}\label{extreme} 
Let $f\in \ex S^0(\D^n)$ and let $\{f_t\}_{t\geq0}$ be a normalized Loewner chain with $f_0=f$ such that $\{e^{-t}f_t\}_{t\geq 0}$ is a normal family on $\D^n,$ then $e^{-t}f_t\in \ex S^0(\D^n)$ for all $t\geq 0.$ 
\end{theorem}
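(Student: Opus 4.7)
The plan is to mimic the classical one-dimensional argument by decomposing a Loewner chain into its pieces on $[0,s]$ and on $[s,\infty)$. Fix $s\geq 0$; by Proposition \ref{daniel}(a), $e^{-s}f_s\in S^0(\D^n)$ already. Suppose $e^{-s}f_s=\lambda g_1+(1-\lambda)g_2$ with $g_1,g_2\in S^0(\D^n)$ and $\lambda\in(0,1)$. Picking a Herglotz vector field $G$ generating $\{f_t\}$ and using the cocycle relation $f_0=f_s\circ\varphi_{0,s}$ from Theorem \ref{Gabriela}, this decomposition rewrites as
\begin{equation*}
f \;=\; \lambda\cdot e^s\, g_1\circ\varphi_{0,s} \;+\; (1-\lambda)\cdot e^s\, g_2\circ\varphi_{0,s}.
\end{equation*}
Once I know that each $F_i:=e^s\,g_i\circ\varphi_{0,s}$ again lies in $S^0(\D^n)$, extremality of $f$ forces $F_1=F_2=f$. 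Since $\varphi_{0,s}$ is a biholomorphism of $\D^n$ onto its open image (as $D\varphi_{0,s}(0)=e^{-s}I_n$ is invertible), the identity principle then yields $g_1=g_2=e^{-s}f_s$, which is exactly the extremality of $e^{-s}f_s$.

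The substantive step is therefore to show that $e^s\,g\circ\varphi_{0,s}\in S^0(\D^n)$ whenever $g\in S^0(\D^n)$. For this I would glue Herglotz vector fields: let $H$ be a Herglotz vector field whose associated Loewner chain (from Theorem \ref{Gabriela}) has $g$ as its initial element, and set
\begin{equation*}
\tilde G(z,t) \;=\; \begin{cases} G(z,t), & 0\leq t\leq s, \\ H(z,t-s), & t>s. \end{cases}
\end{equation*}
This is again a Herglotz vector field. Writing $\tilde\varphi_{u,t}$ for its evolution family and $\tilde f_t$ for the associated Loewner chain, the algebraic property \eqref{algebra} gives $\tilde\varphi_{0,t}=\tilde\varphi_{s,t}\circ\varphi_{0,s}$ for $t\geq s$, while uniqueness of solutions identifies $\tilde\varphi_{s,t}$ with the evolution family of $H$ in the shifted variable $\tau=t-s$. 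Passing to the limit $t\to\infty$ in \eqref{represent} then gives $\tilde f_0=e^s\,g\circ\varphi_{0,s}$, so this function belongs to $S^0(\D^n)$. The normalization $DF_i(0)=I_n$ is automatic from $Dg_i(0)=I_n$ and $D\varphi_{0,s}(0)=e^{-s}I_n$.

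The main obstacle is executing this gluing cleanly: verifying measurability of $\tilde G$ in $t$ across the splice at $t=s$, checking that $\tilde\varphi_{s,t}$ truly is the time-shifted evolution of $H$ (immediate from ODE uniqueness), and justifying that the limit \eqref{represent} for $\tilde G$ at $s=0$ factors through $\varphi_{0,s}$ to produce $e^s\,g\circ\varphi_{0,s}$. These steps are routine within the Loewner ODE formalism of Theorem \ref{Gabriela}. With this ingredient in hand, the extremality argument from the first paragraph concludes the proof; the same blueprint, replacing the convex-decomposition argument by a separating linear functional, also gives Theorem \ref{sup}.
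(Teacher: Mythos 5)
Your extremality argument is exactly the paper's: decompose $e^{-s}f_s$, pull the decomposition back through $f=f_s\circ\varphi_{0,s}$, invoke the key fact that $e^{s}g\circ\varphi_{0,s}\in S^0(\D^n)$ for $g\in S^0(\D^n)$ (the paper's Lemma \ref{s0}), and finish with the identity theorem. The only real divergence is how that key fact is obtained. You splice Herglotz vector fields: take $G$ generating the given chain, append a vector field $H$ generating $g$ after time $s$, and pass to the limit \eqref{represent}. The paper instead splices the Loewner chains themselves: it takes a normalized chain $\{G(\cdot,u)\}$ for $g$ (via Proposition \ref{daniel}\,b)) and defines $F(\cdot,u)=e^{s}G(\varphi_{u,s})$ for $u\leq s$ and $F(\cdot,u)=e^{s}G(\cdot,u-s)$ afterwards, then applies Proposition \ref{daniel}\,b) again. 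The practical difference: your route requires the step you dismiss in one clause, namely ``picking a Herglotz vector field $G$ generating $\{f_t\}$.'' The hypothesis of the theorem only hands you a normalized Loewner chain with the normal-family property; that such a chain is generated by a Herglotz vector field (equivalently, that its transition maps $\varphi_{s,t}=f_t^{-1}\circ f_s$ solve \eqref{Loewner_ODE}) is a genuine theorem, true in the literature (e.g. \cite{GKK03, GHK}) but nowhere proved or even stated in this paper, whose Theorem \ref{Gabriela} goes only from vector fields to chains. The paper's chain-splicing deliberately sidesteps this: it needs nothing beyond the algebraically defined evolution family $f_t^{-1}\circ f_s$ and the chain characterization of $S^0(\D^n)$. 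So either cite the chain-to-vector-field result explicitly or adopt the chain-splicing version of the lemma; as written, that unacknowledged dependency is the one soft spot in an otherwise correct proof. (Also, your closing remark that the same blueprint yields Theorem \ref{sup} is too quick: there one must additionally show the new functional is non-constant on $S^0(\D^n)$, which is what Proposition \ref{marina} is for.)
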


 Our proof for Theorem \ref{sup} generalizes ideas from a proof for the case $n=1$, which is described in \cite{hallenbeck1984linear}; 
 see also \cite{schl2} for the case of the unit ball. Theorem \ref{extreme} is proved for the unit ball in 
\cite[Theorem 2.1]{MR2943779} and we can simply adopt this proof for the polydisc.\\

First, we note that, given an evolution family $\varphi_{s,t}$ associated to a Herglotz vector field and a mapping $G\in S^0(\D^n)$,
then $e^{t-s} G(\varphi_{s,t})$ is also in $S^0(\D^n),$ which is mentioned in the proof of Theorem 2.1 in \cite{MR2943779} for the unit ball case.

\begin{lemma}\label{s0}
Let $G\in S^0(\D^n)$ and $t\geq 0.$ Furthermore, let  $\{f_u\}_{u\geq0}$ be a normalized Loewner chain  such that $\{e^{-u}f_u\}_{u\geq0}$ is a normal family and let $\varphi_{s,t}$ be the associated evolution family.  Then $e^{t-s} G(\varphi_{s,t}) \in S^0(\D^n)$ for every $0\leq s \leq t.$
\end{lemma}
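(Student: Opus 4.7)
The plan is to realize $e^{t-s}G(\varphi_{s,t})$ as the initial element $f_0$ of a normalized Loewner chain whose normalized time-slices form a normal family, and then invoke Proposition \ref{daniel}(b) or, equivalently, Theorem \ref{Gabriela}. The construction is by concatenation of two Herglotz vector fields: first run the one that generates $\varphi_{s,t}$ for a time interval of length $t-s$, and then switch to the Herglotz vector field of a chain realizing $G$.

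More precisely, let $G^{(1)}$ be the Herglotz vector field associated with $\{f_u\}_{u\geq 0}$, so that $\varphi_{s,t}$ solves \eqref{Loewner_ODE} for $G^{(1)}$. Since $G\in S^0(\D^n)$, Theorem \ref{Gabriela} (or Proposition \ref{daniel}(b)) supplies a Herglotz vector field $G^{(2)}$ with associated evolution family $\psi_{s',t'}$ and $\lim_{\tau\to\infty} e^{\tau}\psi_{0,\tau}=G$ locally uniformly on $\D^n$. I would then define
\[
\widehat{G}(z,\tau):=\begin{cases} G^{(1)}(z,\tau+s), & 0\leq \tau\leq t-s,\\ G^{(2)}(z,\tau-(t-s)), & \tau>t-s.\end{cases}
\]
A short verification shows that $\widehat{G}(\cdot,\tau)\in\M(\D^n)$ for a.e. $\tau\geq 0$ and that $\widehat{G}(z,\cdot)$ is measurable, so $\widehat{G}$ is a Herglotz vector field.

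Let $\widehat{\varphi}_{s',t'}$ denote the evolution family of $\widehat{G}$. By the time-shift structure of the two pieces, $\widehat{\varphi}_{0,t-s}=\varphi_{s,t}$ and, for $\tau\geq t-s$, $\widehat{\varphi}_{t-s,\tau}=\psi_{0,\tau-(t-s)}$. Applying the semigroup property \eqref{algebra} gives
\[
\widehat{\varphi}_{0,\tau}=\widehat{\varphi}_{t-s,\tau}\circ\widehat{\varphi}_{0,t-s}=\psi_{0,\tau-(t-s)}\circ\varphi_{s,t}\qquad (\tau\geq t-s),
\]
and hence
\[
e^{\tau}\widehat{\varphi}_{0,\tau}(z)=e^{t-s}\bigl(e^{\tau-(t-s)}\psi_{0,\tau-(t-s)}\bigr)(\varphi_{s,t}(z)).
\]
As $\tau\to\infty$, the inner factor converges locally uniformly on $\D^n$ to $G$, so the right-hand side converges locally uniformly on $\D^n$ to $e^{t-s}G(\varphi_{s,t}(z))$. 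Therefore, by Theorem \ref{Gabriela}, the Herglotz vector field $\widehat{G}$ generates a normalized Loewner chain $\{\widehat{f}_u\}_{u\geq 0}$ with $\{e^{-u}\widehat{f}_u\}_{u\geq 0}$ a normal family and $\widehat{f}_0=e^{t-s}G(\varphi_{s,t})$. Proposition \ref{daniel}(b) then yields $e^{t-s}G(\varphi_{s,t})\in S^0(\D^n)$.

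The main obstacle is essentially bookkeeping: checking that the concatenated field $\widehat{G}$ really lands in $\M(\D^n)$ at a.e.\ time and is measurable in $\tau$ (both are immediate from the two pieces), and that the evolution family splits according to the semigroup law across the gluing time $\tau=t-s$. Once these are in place, the identification of the limit $\lim_{\tau\to\infty} e^{\tau}\widehat{\varphi}_{0,\tau}$ is straightforward from the convergence provided by the parametric representation of $G$.
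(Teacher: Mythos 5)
Your argument is correct, but it runs at the level of Herglotz vector fields, whereas the paper argues directly with Loewner chains. The paper takes a chain $\{G(\cdot,u)\}_{u\geq0}$ realizing $G$ (via Proposition \ref{daniel} b)) and simply writes down the new chain $F(\cdot,u)=e^{t-s}G(\varphi_{s+u,t})$ for $0\leq u\leq t-s$ and $F(\cdot,u)=e^{t-s}G(\cdot,u+s-t)$ for $u>t-s$, checks that it is a normalized Loewner chain with $\{e^{-u}F(\cdot,u)\}_{u\geq0}$ a normal family and $F(\cdot,0)=e^{t-s}G(\varphi_{s,t})$, and concludes again by Proposition \ref{daniel} b). You instead glue the two Herglotz vector fields, identify the evolution family of the glued field across the gluing time using uniqueness of solutions of \eqref{Loewner_ODE} together with \eqref{algebra}, and compute $\lim_{\tau\to\infty}e^{\tau}\widehat{\varphi}_{0,\tau}=e^{t-s}G(\varphi_{s,t})$, invoking Theorem \ref{Gabriela}. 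The two constructions are dual: the chain generated by your $\widehat{G}$ is exactly the paper's $F$, since $\lim_{\tau\to\infty}e^{\tau}\widehat{\varphi}_{u,\tau}=e^{t-s}G(\varphi_{s+u,t})$ for $u\leq t-s$. What your route costs is the fact (standard, but not stated as such in the paper) that the given chain $\{f_u\}_{u\geq0}$ is generated by some Herglotz vector field $G^{(1)}$; this is harmless here because the paper's notion of ``associated evolution family'' already presupposes a field via \eqref{Loewner_ODE}. You also need the routine observations that uniqueness of solutions gives $\widehat{\varphi}_{0,t-s}=\varphi_{s,t}$ and $\widehat{\varphi}_{t-s,\tau}=\psi_{0,\tau-(t-s)}$, and that locally uniform convergence survives right-composition with the fixed map $\varphi_{s,t}$ (it does, since $\varphi_{s,t}$ sends compact subsets of $\D^n$ into compact subsets of $\D^n$). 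What your route buys is that the subordination and normal-family properties of the new chain come for free from Theorem \ref{Gabriela}, whereas the paper verifies them by hand for $F$.
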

\begin{proof}
Let $\{G(\cdot,u)\}_{u\geq0}$ be a normalized Loewner chain with $G(\cdot,0)=G$ such that $\{e^{-u}G(\cdot,u)\}_{u\geq0}$ is a normal family and let  $F(z,u):\D^n\times [0,\infty)\to \C^n$ be the mapping $$F(z,u)=\begin{cases}
e^{t-s} G(\varphi_{s+u,t}(z)), &\quad 0\leq u \leq t-s,\\
e^{t-s} G(z, u+s-t), &\quad u>t-s.                                                                   \end{cases}
$$ Then $\{F(\cdot,u)\}_{u\geq0}$ is a normalized Loewner chain, $F(\cdot,0)=e^{t-s}G(\varphi_{s,t})$ and $\{e^{-u}F(\cdot,u)\}_{u\geq 0}$ is a normal family. Thus $e^{t-s}G(\varphi_{s,t})\in S^0(\D^n).$ 
\end{proof}

\begin{proof}[Proof of Theorem \ref{extreme}.]
 Suppose that $e^{-t}f_t\not\in \ex S^0(\D^n)$ for some $t>0.$ Then $e^{-t}f_t=sa+(1-s)b$ for some $a,b\in S^0(\D^n)$ with $a\not=b$ and $s\in(0,1).$ As $f=f_t\circ \varphi_{0,t},$ we have
$$ f = s \cdot (e^ta\circ \varphi_{0,t}) +  (1-s)\cdot (e^tb\circ \varphi_{0,t}).$$
The functions $e^ta\circ \varphi_{0,t}$ and $e^tb\circ \varphi_{0,t}$ belong to $S^0(\D^n)$ according to Lemma \ref{s0}. Thus, as $f\in \ex S^0(\D^n)$, they are identical and the identity theorem implies $a=b$, a contradiction.
\end{proof}

Choosing $G(z)=z$ in Lemma \ref{s0} shows that $e^{t-s}\varphi_{t-s}\in S^0(\D^n).$ 

\begin{lemma}\label{pol}
Let $\varphi_{s,t}$ be defined as in Lemma \ref{s0} and let $h= e^{t-s}\varphi_{s,t}\in S^0(\D^n).$ Furthermore, let  $P:\C^n\to\C^n$ be a polynomial with $P(0)=0,$ $DP(0)=0,$ then there exists $\delta>0$ such that $$h+\eps e^{t-s} P(e^{s-t}h) \in S^0(\D^n)\quad \text{for all} \quad \eps\in\C \quad \text{with} \quad |\eps|<\delta.$$
\end{lemma}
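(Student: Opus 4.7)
The plan is to reduce the problem to showing that $\mathrm{id}_{\D^n}+\eps P\in S^0(\D^n)$ for all sufficiently small $|\eps|$. Using $h=e^{t-s}\varphi_{s,t}$, one checks directly that
\[
h+\eps\,e^{t-s}P(e^{s-t}h)=e^{t-s}\varphi_{s,t}+\eps\,e^{t-s}P(\varphi_{s,t})=e^{t-s}\,(\mathrm{id}_{\D^n}+\eps P)\circ\varphi_{s,t},
\]
so Lemma~\ref{s0}, applied with $G=\mathrm{id}_{\D^n}+\eps P$, delivers the conclusion as soon as $G\in S^0(\D^n)$.

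To prove $F_\eps:=\mathrm{id}_{\D^n}+\eps P\in S^0(\D^n)$ for $|\eps|$ small, the plan is to verify that $F_\eps$ is starlike and then invoke $S^*(\D^n)\subset S^0(\D^n)$ from Proposition~\ref{daniel}~c). Because $P(0)=0$ and $DP(0)=0$, $F_\eps(0)=0$ and $DF_\eps(0)=I_n$. Since $DP$ is a polynomial and thus bounded on $\overline{\D}^n$, a Neumann series makes $DF_\eps(z)=I_n+\eps DP(z)$ invertible uniformly on $\overline{\D}^n$ once $|\eps|$ is below an explicit threshold; in particular $F_\eps$ is locally biholomorphic on $\D^n$ and Theorem~\ref{Juergen} reduces starlikeness to the membership
\[
g_\eps(z):=-\bigl(DF_\eps(z)\bigr)^{-1}F_\eps(z)\in\M(\D^n).
\]

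Expanding the Neumann series yields $g_\eps(z)=-z+\sum_{k\geq 1}\eps^k G_k(z)$ with $G_1(z)=DP(z)z-P(z)$ and, by a direct degree count on the expressions $(DP)^k z$ and $(DP)^{k-1}P$, each $G_k$ a polynomial vanishing at $0$ to order at least $k+1\geq 2$. The key technical estimate, which I view as the heart of the proof, is: for any polynomial $\Phi:\C^n\to\C$ vanishing to order $d\geq 2$ at $0$, and every $z\in\D^n$ with $|z_j|=\|z\|_\infty>0$,
\[
\bigl|\Phi(z)/z_j\bigr|\leq\|\Phi\|_1\cdot\|z\|_\infty^{d-1},
\]
where $\|\Phi\|_1$ is the sum of the moduli of the coefficients of $\Phi$. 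I prove this monomial-by-monomial: for $z^\alpha$ with $\alpha_j\geq 1$ the quotient $z^\alpha/z_j$ is itself a monomial of degree $|\alpha|-1$; when $\alpha_j=0$ one exploits $|z_k|\leq|z_j|$ (which holds because $|z_j|=\|z\|_\infty$) to get $|z^\alpha|\leq|z_j|^{|\alpha|}$, hence $|z^\alpha/z_j|\leq\|z\|_\infty^{|\alpha|-1}$. Combined with a submultiplicative bound $\|G_k\|_1\leq C\,M^k$ coming from the Neumann expansion (with $M$ depending on the $\ell^1$-norms of $P$ and $DP$), this gives
\[
\bigl|g_{\eps,j}(z)/z_j+1\bigr|\leq\sum_{k\geq 1}|\eps|^k\|G_{k,j}\|_1\|z\|_\infty^k,
\]
which is strictly less than $1$ uniformly on $\{|z_j|=\|z\|_\infty>0\}\cap\D^n$ once $|\eps|$ is small enough. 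Hence $\Re(g_{\eps,j}(z)/z_j)<0$ on that set, so $g_\eps\in\M(\D^n)$, $F_\eps\in S^*(\D^n)\subset S^0(\D^n)$, and the lemma follows.

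The main obstacle is the division-by-$z_j$ estimate above: this is where the geometry of the polydisc (the compatibility between $\|z\|_\infty$ and the maximal component $|z_j|$) is really used, and it is precisely what one would lose in attempting the same proof on the Euclidean ball. With that estimate in hand, the rest is a routine application of Neumann-series bookkeeping together with the standard starlikeness criterion of Theorem~\ref{Juergen}.
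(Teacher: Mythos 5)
Your proof is correct and takes essentially the same route as the paper: reduce via Lemma \ref{s0} to showing that $\mathrm{id}+\eps P\in S^*(\D^n)\subset S^0(\D^n)$ for all small $|\eps|$, using the starlikeness criterion of Theorem \ref{Juergen}. The only difference is that you verify the $\M(\D^n)$-membership of $-(D(\mathrm{id}+\eps P))^{-1}(\mathrm{id}+\eps P)$ by an explicit Neumann-series and $\ell^1$-coefficient estimate (including the division-by-$z_j$ bound), whereas the paper argues more softly that the relevant quotients converge uniformly to $-1$ on $\overline{\{\|z\|_\infty=|z_j|>0\}}$ as $\eps\to 0$; both are valid, yours being merely more quantitative.
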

\begin{proof} Let $g_\eps(z)=z+\eps P(z).$
Obviously we have $g_{\eps}(0)=0, \; Dg_{\eps}(0)=I_n.$\\
Now $\det(Dg_{\eps}(z))\to 1$ for $\eps\to 0$ uniformly on $\overline{\D^n},$ so $g_{\eps}$ is locally biholomorphic for $\eps$ small enough. In this case, for every $z\in \overline{\D^n}$, we have:\\ $$[Dg_{\eps}(z)]^{-1}=[I_n+\eps DP(z)]^{-1}=I_n-\eps DP(z) + \eps^2DP(z)^2+... = I_n-\eps\underbrace{(DP(z)+...)}_{:=U(z)\in \C^{n\times n}}.$$ 
Write $[Dg_{\eps}(z)]^{-1}g_{\eps}(z) = z+\eps P(z)-\eps U(z)z-\eps^2 U(z)P(z)=$
$(I_n+\eps M(z))z,$ with a matrix-valued function $M(z).$\\
Now we show that $g_\eps\in S^*(\D^n)$ for $|\eps|$ small enough.\\
Let $g_j(z)$ be the $j$-th component of $-[Dg_{\eps}(z)]^{-1}g_{\eps}(z)$. For $\eps\to 0,$ the function $g_j(z)/z_j$ converges uniformly to $-1$ on the set $K:=\overline{\{z\in\D^n \with  \|z\|_{\infty}=|z_j|>0\}}.$ Thus there exists $\delta >0$ such that 
$$\Re \left(\frac{g_j(z)}{z_j}\right)<0\quad \text{for all} \quad z\in K, j=1,...,n\quad \text{and all} \quad \eps\in\C \quad \text{with} \; |\eps|<\delta.$$ 
Hence, $g_\eps \in S^*(\D^n)\subset S^0(\D^n)$ for all $\eps$ small enough by Theorem \ref{Juergen}.\\

From Lemma \ref{s0} it follows that $e^{t-s}g_\eps(\varphi_{s,t})=e^{t-s}g_\eps(e^{s-t}h)=h+\eps e^{t-s}P(e^{s-t}h)\in S^0(\D^n).$\end{proof}

The next statement shows that a special class of bounded mappings are not support points of $S^0(\D^n).$
 
\begin{proposition}\label{marina}
 Let $\varphi_{s,t}$ be defined as in Lemma \ref{s0} and let $h=e^{t-s} \varphi_{s,t} \in S^0(\D^n).$ Then $h$ is not a support point of $S^0(\D^n).$
\end{proposition}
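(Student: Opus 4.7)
The plan is to argue by contradiction: assume $h\in\supp S^0(\D^n)$ with continuous $\C$--linear functional $L$ on $\Ho(\D^n,\C^n)$ such that $\Re L$ is non-constant on $S^0(\D^n)$ and attains its maximum at $h$, and then show that $L$ must in fact be constant on $S^0(\D^n)$. Write $\psi:=\varphi_{s,t}=e^{s-t}h$. Lemma \ref{pol} says that for every vector polynomial $P:\C^n\to\C^n$ with $P(0)=0$ and $DP(0)=0$ there exists $\delta>0$ such that $\varepsilon\mapsto h+\varepsilon e^{t-s}P(\psi)$ lies in $S^0(\D^n)$ for all $|\varepsilon|<\delta$. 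Since $L$ is $\C$--linear,
\[ \Re L(h+\varepsilon e^{t-s}P(\psi))=\Re L(h)+\Re\bigl(\varepsilon e^{t-s}L(P(\psi))\bigr), \]
and this is $\leq \Re L(h)$ for every $\varepsilon$ in a disc around $0\in\C$. Rotating the phase of $\varepsilon$ forces $L(P(\psi))=0$ for every such polynomial $P$.

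Next I would promote this to arbitrary holomorphic functions. If $F\in\Ho(\D^n,\C^n)$ satisfies $F(0)=0$ and $DF(0)=0$, then its Taylor polynomials $P_N$ of order $\leq N$ are polynomials of order $\geq 2$ converging to $F$ uniformly on compact subsets of $\D^n$; since $\psi(\D^n)\subset\D^n$ sends compacts to compacts, $P_N\circ\psi\to F\circ\psi$ in the Fr\'echet topology of $\Ho(\D^n,\C^n)$, and continuity of $L$ yields $L(F\circ\psi)=0$. Applying this to $F=G-\mathrm{id}$ gives the key identity
\[ L(G\circ\psi)=L(\psi) \quad\text{for every $G\in\Ho(\D^n,\C^n)$ with $G(0)=0$ and $DG(0)=I_n$.} \]

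The final step is to transfer this to all of $S^0(\D^n)$. Since $h\in S^0(\D^n)$, Theorem \ref{range}(b) says $h(\D^n)$, hence $\psi(\D^n)=e^{s-t}h(\D^n)$, is a Runge domain in $\C^n$. For any $f\in S^0(\D^n)\cap\aut(\C^n)$ the map $e^{s-t}f\circ\psi^{-1}$ is holomorphic on $\psi(\D^n)$, and has value $0$ and derivative $I_n$ at the origin. Rungeness gives entire maps $Q_k:\C^n\to\C^n$ converging to $e^{s-t}f\circ\psi^{-1}$ uniformly on compact subsets of $\psi(\D^n)$; after the normalisation $\tilde Q_k:=DQ_k(0)^{-1}(Q_k-Q_k(0))$ (valid for large $k$) we obtain entire maps with $\tilde Q_k(0)=0$, $D\tilde Q_k(0)=I_n$ and the same limiting behaviour. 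The restrictions $\tilde Q_k|_{\D^n}$ fall under the hypothesis of the key identity, so $L(\tilde Q_k\circ\psi)=L(\psi)$ for all $k$; and $\tilde Q_k\circ\psi\to e^{s-t}f$ in $\Ho(\D^n,\C^n)$, so continuity of $L$ gives $L(f)=e^{t-s}L(\psi)=L(h)$. By Theorem \ref{range}(c), $S^0(\D^n)\cap\aut(\C^n)$ is dense in $S^0(\D^n)$, so $L\equiv L(h)$ on $S^0(\D^n)$, contradicting the assumption that $\Re L$ is non-constant on $S^0(\D^n)$.

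The main obstacle is the last paragraph: the variations supplied by Lemma \ref{pol} only tell us how $L$ acts on functions built by composition with $\psi$, which sees the subdomain $\psi(\D^n)$ and carries the wrong derivative at $0$, so a priori they are very far from exhausting $S^0(\D^n)$. Rungeness of $h(\D^n)$ combined with the density of $S^0(\D^n)\cap\aut(\C^n)$ is precisely what lets us bridge between these restricted variations and the full compact class.
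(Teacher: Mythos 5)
Your argument is essentially the paper's: the core is identical (use Lemma \ref{pol} to see that the variations $h+\eps e^{t-s}P(\psi)$ stay in $S^0(\D^n)$, deduce $L(P(\psi))=0$, then use the Rungeness of $\psi(\D^n)$ from Theorem \ref{range}~b) together with continuity of $L$ to force $\Re L$ to be constant, a contradiction), and all the individual steps you give are sound. The only difference is in how you finish: the paper approximates, for an arbitrary $g\in\Ho(\D^n,\C^n)$ with $g(0)=0$ and $Dg(0)=0$, the map $g\circ\varphi_{s,t}^{-1}$ by polynomials on the Runge domain $\varphi_{s,t}(\D^n)$, obtaining $\Re L(g)=0$ and hence constancy of $\Re L$ on all of $S(\D^n)$, whereas you first prove the key identity $L(G\circ\psi)=L(\psi)$ and then detour through $S^0(\D^n)\cap\aut(\C^n)$ and the density statement of Theorem \ref{range}~c). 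That detour is unnecessary and slightly weakens the result: Theorem \ref{range}~c) is only stated for $n\geq 2$, so as written your proof does not cover $n=1$, and it invokes a much heavier tool (Anders\'en--Lempert approximation) than is needed. Note that your own Runge-approximation step in the last paragraph never uses that $f$ is an automorphism: applied verbatim to an arbitrary normalized $f\in S(\D^n)$ (approximate $e^{s-t}f\circ\psi^{-1}$ on the Runge domain $\psi(\D^n)$ and renormalize), it already gives $L(f)=L(h)$ for all $f\in S(\D^n)\supseteq S^0(\D^n)$, which is exactly the paper's conclusion and removes both the dimension restriction and the appeal to Theorem \ref{range}~c).
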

\begin{proof}
Assume that $h$ is a support point of $S^0(\D^n),$ i.e. there is a continuous linear functional $L:\Ho(\D^n,\C^n)\to \C$ such that $\Re L$ is non-constant on $S^0(\D^n)$ and $$\Re L(h)=\max_{g\in S^0(\D^n)}\Re L(g).$$ Let $P$ be a polynomial with $P(0)=0$ and $DP(0)=0.$ Then $h+\eps e^{t-s} P(e^{s-t}h)\in S^0(\D^n)$ for all $\eps\in\C$ small enough by Lemma \ref{pol}.\\
We conclude $$\Re L(P(e^{s-t}h))=\Re L(P(\varphi_{s,t}))=0,$$ otherwise we could choose $\eps$ such that $\Re L(h+\eps e^{t-s}P(e^{s-t}h))>\Re L(h).$\\ 
Now $\varphi_{s,t}(\D^n)$ is a Runge domain by Theorem \ref{range} b). Hence we can write any analytic function $g$ defined in $\D^n$ with $g(0)=0$ and $Dg(0)=0$ as $g=\lim_{k\to \infty}P_k(\varphi_{s,t}),$ where every $P_k$ is a polynomial with $P_k(0)=0$ and $DP_k(0)=0$. The continuity of $L$ implies $\Re L(g)=0.$ Hence $\Re L$ is constant on $S(\D^n),$ a contradiction.
\end{proof}

\begin{proof}[Proof of Theorem \ref{sup}.]
 Let $L$ be a continuous linear functional on $\Ho(\D^n,\C^n)$ such that $\Re L$  is non-constant on $S^0(\D^n)$ with $$\Re L(f)=\max_{g\in S^0(\D^n)} \Re L(g).$$
Fix $t\geq 0,$ then $f(z)=f_t(\varphi_{0,t}(z))$ for all $z\in \D^n$. Define the continuous linear functional  $$J(g):=L(e^t\cdot g\circ \varphi_{0,t})  \quad \text{for}\quad g\in \Ho(\D^n,\C^n).$$
Now we have $$J(e^{-t} f_t)=L(f)\quad \text{and}\quad \Re J(g)\leq \Re J(e^{-t}f_t) \quad \text{for all}\quad g\in \Ho(\D^n,\C^n).$$
Furthermore, $\Re J$ is not constant on $S^0(\D^n)$: as $e^t\varphi_{0,t}$ is not a support point of $S^0(\D^n)$ by Proposition \ref{marina}, we have $\Re J(\text{id})=\Re L(e^t \varphi_{0,t})< \Re L(f)=\Re J(e^{-t}f_t).$
\end{proof}

\section{Coefficients of degree 2}\label{sec_4}

In this section we consider the coefficient functionals for coefficients of degree $2$. 
Let $(f_1,...,f_n)\in S^0(\D^n)$. By taking a permutation of the functions 
$f_1,...,f_n$ (and the variables $z_1,...,z_n$), we obtain again a mapping in 
$S^0(\D^n).$ Hence it is sufficient to consider the coefficients of $f_1$ only. We write 
$$ f_1(z) = z_1 + \sum_{|\alpha|\geq 2} A_\alpha z^\alpha. $$
Here we use multiindices $\alpha=(\alpha_1,...,\alpha_n)\in \N_0^n$ with $|\alpha|:=\alpha_1 + 
... + \alpha_n,$ $z^\alpha:=z_1^{\alpha_1}\cdot...\cdot z_n^{\alpha_n}.$\\
We are interested in the continuous linear functional $f\mapsto A_\alpha$ and the maximum of $\Re A_\alpha$ over $S^0(\D^n).$ 
First, we note that $$\max_{f\in S^0(\D^n)}\Re(A_\alpha) = \max_{f\in S^0(\D^n)} |A_\alpha|.$$ 
This can be seen by the following lemma which implies that we can always ``rotate'' functions from $S^0(\D^n)$ such that $A_\alpha\in (0,\infty).$

\begin{lemma}\label{rot}${}$ \begin{itemize}
	\item[a)] Let $h\in \M(\D^n)$ and  $j(z)=(e^{-i\alpha_1}h_1,...,e^{-i\alpha_1}h_n)(e^{i\alpha_1}z_1,...,e^{i\alpha_n}z_n)$ 
for some $\alpha_1,...,\alpha_n\in \R.$ Then $j\in \M(\D^n).$
\item[b)] Let $f\in S^0(\D^n)$ and  $g(z)=(e^{-i\alpha_1}f_1,...,e^{-i\alpha_1}f_n)(e^{i\alpha_1}z_1,...,e^{i\alpha_n}z_n)$ 
for some $\alpha_1,...,\alpha_n\in \R.$ Then $g\in S^0(\D^n).$
\end{itemize}
\end{lemma}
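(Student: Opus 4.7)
The plan is to reduce both parts to a single algebraic observation: writing $U := \operatorname{diag}(e^{i\alpha_1},\ldots,e^{i\alpha_n})$, the prescribed transformations take the uniform shape $j(z) = U^{-1}h(Uz)$ and $g(z) = U^{-1}f(Uz)$, and $U$ is a unitary automorphism of $\C^n$ that maps $\D^n$ onto itself and preserves every coordinate modulus: $|(Uz)_k|=|z_k|$ for all $k$. Once this is noted, both parts follow from checking that the defining conditions of $\M(\D^n)$ and $S^0(\D^n)$ are invariant under conjugation by such a $U$.

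For part (a), I would first verify the normalization: $j(0)=U^{-1}h(0)=0$, and by the chain rule
\begin{equation*}
Dj(0) \;=\; U^{-1}\,Dh(0)\,U \;=\; U^{-1}(-I_n)U \;=\; -I_n,
\end{equation*}
since scalar matrices commute with $U$. For the Carath\'eodory-type inequality, I would compute
\begin{equation*}
\frac{j_k(z)}{z_k} \;=\; \frac{e^{-i\alpha_k}h_k(Uz)}{z_k} \;=\; \frac{h_k(Uz)}{(Uz)_k}
\end{equation*}
and observe that $\|z\|_\infty=|z_k|>0$ is equivalent to $\|Uz\|_\infty=|(Uz)_k|>0$ because $U$ preserves coordinate moduli. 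The hypothesis $h\in\M(\D^n)$ applied at the point $Uz\in\D^n$ then yields $\Re(j_k(z)/z_k)\leq 0$, so $j\in\M(\D^n)$.

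For part (b), the cleanest route is through the Loewner-chain characterization in Proposition \ref{daniel}(b). I would pick a normalized Loewner chain $\{f_t\}_{t\geq 0}$ with $f_0=f$ such that $\{e^{-t}f_t\}_{t\geq 0}$ is a normal family, and set $g_t(z):=U^{-1}f_t(Uz)$. Each $g_t$ is univalent with $g_t(0)=0$ and $Dg_t(0)=U^{-1}(e^tI_n)U=e^tI_n$; the inclusions $f_s(\D^n)\subseteq f_t(\D^n)$ for $s\leq t$ transform to $g_s(\D^n)\subseteq g_t(\D^n)$, so $\{g_t\}$ is a normalized Loewner chain. Moreover $e^{-t}g_t(z)=U^{-1}(e^{-t}f_t)(Uz)$, and pre/post-composition with the isometry $U$ preserves normality, so $\{e^{-t}g_t\}_{t\geq 0}$ is again a normal family. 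Proposition \ref{daniel}(b) then gives $g=g_0\in S^0(\D^n)$.

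There is no real obstacle here; the only thing to be careful about is the coordinate-modulus identity $|(Uz)_k|=|z_k|$, which is precisely what makes the defining condition of $\M(\D^n)$ (an inequality that involves the distinguished index achieving $\|z\|_\infty$) survive the conjugation. Alternatively, one could prove (b) directly from (a) by conjugating a Herglotz vector field $G$ generating $f$ into $\tilde G(z,t):=U^{-1}G(Uz,t)$, noting that the solution of the corresponding Loewner ODE is $U^{-1}\varphi_{0,t}(U\,\cdot\,)$, and passing to the limit in \eqref{represent}; this gives a slightly longer but equally direct argument.
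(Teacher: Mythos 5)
Your proof is correct, and it essentially fills in the details the paper leaves implicit (its entire proof reads ``a) follows directly from the definition of $\M(\D^n)$ and b) can be shown by using a)''). Two remarks. First, you tacitly correct a typo in the statement: as printed, every component carries the factor $e^{-i\alpha_1}$, which would not give $Dj(0)=-I_n$ unless all the $\alpha_k$ coincide modulo $2\pi$; your reading $j=U^{-1}h(U\,\cdot\,)$ with $U=\operatorname{diag}(e^{i\alpha_1},\dots,e^{i\alpha_n})$ is the intended one, and your verification of the normalization together with $\Re\left(j_k(z)/z_k\right)\le 0$ via the coordinate-modulus identity $|(Uz)_k|=|z_k|$ is exactly the ``directly from the definition'' argument for a). Second, for b) the paper's hint points to the route you only sketch at the end: conjugate the Herglotz vector field by $U$ (legitimate by part a)), note that $U^{-1}\varphi_{0,t}(U\,\cdot\,)$ solves the conjugated Loewner ODE, and pass to the limit in \eqref{represent}. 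Your main argument instead conjugates a normalized Loewner chain and invokes Proposition \ref{daniel} b); this is equally valid and arguably cleaner, since it bypasses the ODE and the limit $e^t\varphi_{0,t}$ entirely, at the cost of resting on the chain characterization of $S^0(\D^n)$ rather than on the definition via parametric representation. The only point needing the care you already give it is that pre-composition with the fixed unitary $U$ and post-composition with $U^{-1}$ preserve univalence, the inclusions $f_s(\D^n)\subseteq f_t(\D^n)$, and normality of $\{e^{-t}f_t\}_{t\ge 0}$; with that, either route establishes the lemma.
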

\begin{proof} a) follows directly from the definition of $\M(\D^n)$ and b) can be shown by using a).
\end{proof}

\begin{remark}
The following version of the Bieberbach conjecture for the class $S^0(\D^n)$ has been suggested in \cite{Gon99}:
\begin{equation}\label{Bie}\text{$\left\| \frac1{k!}D^kf(0)(w,w,...,w) \right\|_\infty \leq k$ for all $k\geq 2$ and $w\in \partial \D^n.$}\end{equation}
Obviously, it is sufficient to consider the component function $f_1$ only. For $w\in\partial\D^n$, let $f_w:\D\to\C, f_w(\lambda)=f_1(\lambda w).$ Then 
the conjecture above is equivalent to:
$$ \left|\frac1{k!}f_w^{(k)}(0)\right| \leq k  \quad \text{for all $k\geq 2$ and $w\in \partial \D^n$.}$$
We refer to \cite{LLX15} and the references therein for results concerning this estimate. 
The conjecture is known to be true for $n=2,$ see \cite[Theorem 3]{MR1049182}. In particular, by choosing $w$ to be a standard unit vector, we obtain
\begin{equation}\label{second} |A_{\alpha}|\leq 2  \end{equation}
for all $\alpha$ with $\alpha_j=2$ for some $j=1,...,n$ and $\alpha_k=0$ otherwise.\\
Of course, the estimate for $|D^2f_1(0)(w,w)|$ also implies estimates for the coefficients of the polynomial $D^2f_1(0)(w,w)$, thus for 
all $A_\alpha$ with $|\alpha|=2$.
\end{remark}

We will prove the following sharp estimates for $A_\alpha$ with $|\alpha|=2$. 

\begin{theorem}\label{bie2} Let $n\geq 2$ and $(f_1,...,f_n)\in S^0(\D^n)$, 
$f_1(z)=z_1 + \sum_{|\alpha|\geq 2} A_\alpha z^\alpha.$  Then the following statements hold:
\begin{itemize}
\item[a)] $$ |A_{\alpha}| \leq 2$$
for all $\alpha$ with $|\alpha|=2$ and $\alpha_1\not=0$.
This estimate is sharp for all such $\alpha$ due to the mappings 
$$F_1(z)=\left(\frac{z_1}{(1-z_1)^2},  z_2,...,z_n\right) \quad \text{for } \alpha=(2,0,...,0),$$
$$ F_2(z)=\left(z_1 (1 + z_2)^2, z_2,...,z_n\right), 
F_3(z)=\left(\frac{z_1(1 + z_2)}{1 - z_2}, \frac{z_2}{1 - z_2}, z_3,...,z_n\right) \quad \text{for } \alpha=(1,1,0,...,0).$$
\item[b)] $$ |A_{\alpha}| \leq 1$$
for all $\alpha$ with $|\alpha|=2$ and $\alpha_1=0$. This estimate is sharp for all such $\alpha$ due to the mappings 
$$F_4(z)=\left(z_1+z_2^2, z_2,...,z_n\right), 
F_5(z)=\left(\frac{z_1 - z_1 z_2 + z_2^2}{1 - z_2},\frac{z_2}{1 - z_2},z_3,...,z_n\right)\quad \text{for }  \alpha=(0,2,0,...,0),$$
$$F_6(z)=\left(z_1+z_2z_3, z_2,...,z_n\right),$$ 
$$F_7(z)=
\left(z_1+\frac{z_2 z_3 (\log(1 + z_2) - 
  \log(1 + z_3))}{z_2 - z_3}, \frac{z_2}{1 + z_2}, \frac{z_3}{1 + z_3}, z_4,...,z_n\right)
\quad \text{for } \alpha=(0,1,1,0...,0).$$

\end{itemize}

\end{theorem}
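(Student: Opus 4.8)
The plan is to reduce the coefficient estimates for $f\in S^0(\D^n)$ to pointwise coefficient estimates for the generating Herglotz vector field, and then to prove the latter by a substitution that turns the relevant coefficient into the \emph{linear} coefficient of a one–variable Carath\'eodory function. First I would set up the reduction. Let $\{f_t\}_{t\geq0}$ be a normalized Loewner chain with $f_0=f$ and $\{e^{-t}f_t\}$ normal, generated by a Herglotz vector field $G$ as in Theorem \ref{Gabriela}. Write the first component of $f_t$ as $(f_t)_1(z)=e^tz_1+\sum_{|\alpha|\geq2}A_\alpha(t)z^\alpha$ and the first component of $G(\cdot,t)\in\M(\D^n)$ as $G_1(z,t)=-z_1+\sum_{|\beta|\geq2}B_\beta(t)z^\beta$. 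Inserting these expansions into the Loewner PDE $\partial_tf_t=-Df_t\,G(\cdot,t)$ and comparing the coefficients of a fixed monomial $z^\alpha$ with $|\alpha|=2$ yields, for a.e.\ $t$, the linear ODE $A_\alpha'(t)=2A_\alpha(t)-e^tB_\alpha(t)$. Solving it with the integrating factor $e^{-2t}$ and using that $e^{-t}A_\alpha(t)$ stays bounded (normality of $\{e^{-t}f_t\}$), so that $e^{-2t}A_\alpha(t)\to0$, gives
\[ A_\alpha=\int_0^\infty e^{-t}B_\alpha(t)\,dt. \]
Hence $|A_\alpha|\leq\operatorname{esssup}_t|B_\alpha(t)|$, and everything reduces to bounding the degree-$2$ coefficients of the first component of a \emph{fixed} $h\in\M(\D^n)$: I must show $|B_\beta|\leq2$ for $|\beta|=2$, with the improvement $|B_\beta|\leq1$ when $\beta_1=0$.

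For this $\M$-coefficient lemma the key device is the substitution $z=(w,wv_2,\dots,wv_n)$. For $|w|<1$ and $|v_j|\leq1$ the point $z$ satisfies $\|z\|_\infty=|w|=|z_1|$, so the defining inequality of $\M(\D^n)$ gives $\Re q\geq0$ for the holomorphic function $q(w,v):=-h_1(w,wv_2,\dots,wv_n)/w$, and moreover $q(0,v)=1$. Expanding, $q=1-\sum_{|\beta|\geq2}B_\beta\,w^{|\beta|-1}v_2^{\beta_2}\cdots v_n^{\beta_n}$, so each Taylor coefficient of $q$ has modulus $\leq2$ (it is a Carath\'eodory-type function on the polydisc), which already gives $|B_\beta|\leq2$ and settles part a). The crucial point for part b) is that, when $\beta_1=0$, one can isolate $B_\beta$ as the linear coefficient of a one-variable function which turns out to be \emph{affine}. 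Concretely, for $\beta=2e_j$ I would average $q$ (with the remaining variables set to $0$) over $\theta$ along $w=\zeta e^{-2i\theta}$, $v_j=e^{i\theta}$; for $\beta=e_j+e_k$ I would average over two angles along $w=\zeta\omega_j^{-1}\omega_k^{-1}$, $v_j=\omega_j$, $v_k=\omega_k$. In either case orthogonality of characters annihilates every monomial except those whose multiindex would otherwise force $\beta_1<0$, leaving exactly
\[ \Xi(\zeta)=1-B_\beta\,\zeta, \]
which inherits $\Re\Xi\geq0$ on $\D$ and $\Xi(0)=1$ from $q$.

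The main obstacle — and the entire source of the factor $2$ versus $1$ — is the final observation: an affine function $\zeta\mapsto1+\lambda\zeta$ has non-negative real part on $\D$ if and only if $|\lambda|\leq1$, which is strictly stronger than the Carath\'eodory bound $|\lambda|\leq2$ available for a general $p\in\mathcal P$ (the extremal $\tfrac{1+\zeta}{1-\zeta}$ is emphatically not affine). The reason $\Xi$ is genuinely linear is that the substitution, together with the holomorphicity of $h_1$ near $z_1=0$, forces all higher ``diagonal'' coefficients to vanish; this is exactly where the restriction $\beta_1=0$ enters. Applying the affine bound to $\Xi$ gives $|B_\beta|\leq1$ for $\beta_1=0$, and feeding $|B_\beta|\leq2$ respectively $|B_\beta|\leq1$ back into $A_\alpha=\int_0^\infty e^{-t}B_\alpha(t)\,dt$ produces the two estimates.

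Finally, for sharpness I would simply verify the listed maps. Each $F_i$ is either starlike or realizes an explicit Loewner chain, hence lies in $S^0(\D^n)$ by Proposition \ref{daniel} c) (using $S^*(\D^n)\subset S^0(\D^n)$) or by exhibiting it as $f_0$ of a normalized chain with $\{e^{-t}f_t\}$ normal; a direct power-series expansion of the first component then reads off $A_\alpha=2$ in case a) and $A_\alpha=1$ in case b). The only computations needing genuine care are checking that the non-starlike examples (such as $F_5$ and $F_7$) really have parametric representation, which I would do by displaying each as the initial element of an explicit Herglotz-generated chain.
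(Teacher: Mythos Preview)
Your proof is correct and follows the same overall architecture as the paper: reduce the $S^0$--bound to a pointwise bound on the degree--$2$ coefficients of $h\in\M(\D^n)$ via the Loewner equation, then obtain the $\M$--bound by a shearing/averaging argument. The paper uses the Loewner ODE for $\varphi_{0,t}$ where you use the PDE for $f_t$; both yield the same integral $A_\alpha=\int_0^\infty e^{-s}B_\alpha(s)\,ds$. For the $\M$--estimates, the paper treats the four cases separately with tailored angular substitutions (e.g.\ $z_1=xe^{i\theta}$, $z_2=ye^{i\theta/2}$ for $\alpha=(0,2)$), each time reducing to a one--variable Carath\'eodory inequality or a direct scalar inequality; your single substitution $z=(w,wv_2,\dots,wv_n)$ packages all cases into one polydisc Carath\'eodory function $q$, and your averaging then plays the same role as the paper's angle integrations. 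Your explanation of why the $\alpha_1=0$ case gives the sharper constant~$1$ (the surviving function $\Xi$ is \emph{affine} because any further term would require $\beta_1<0$) is exactly the mechanism hidden in the paper's computation, only stated more conceptually.

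One correction on sharpness: you anticipate needing ad hoc Loewner chains for $F_5$ and $F_7$, but in fact \emph{all} of $F_1,\dots,F_7$ are starlike. The paper verifies $-(DF_j)^{-1}F_j=H_j$ with the $H_j$ listed in Proposition~\ref{cat}, and each $H_j\in\M(\D^n)$ by direct inspection, so Theorem~\ref{Juergen} gives $F_j\in S^*(\D^n)\subset S^0(\D^n)$. No separate chain construction is needed.
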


The examples $F_2,...,F_7$, which all belong to $S^*(\D^n)$ (see the proof of Theorem \ref{bie2}), yield the following corollary.

\begin{corollary} The functional $\Re A_{\alpha}$, with $|\alpha|=2$ and $\alpha_1\not=2,$ is maximized over 
$S^0(\D^n)$ by bounded as well as unbounded mappings. The bounded support points can be chosen to be restrictions of automorphisms of 
$\C^n.$
\end{corollary}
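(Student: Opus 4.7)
Proof plan.

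The corollary reads off directly from the explicit examples $F_2,\ldots,F_7$ constructed in Theorem \ref{bie2}. My plan has three steps.

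First, I would separate these seven mappings by boundedness of their image on $\D^n$. Direct inspection shows that $F_2,F_4,F_6$ are bounded: $|F_{2,1}(z)|=|z_1||1+z_2|^2\le 4$, and the first components of $F_4$ and $F_6$ are degree-$2$ polynomials on $\D^n$ (the other components are coordinate projections). By contrast, each of $F_3,F_5,F_7$ has a component of the form $z_j/(1\pm z_j)$, which is unbounded as $z_j\to \mp 1$. Since Theorem \ref{bie2} asserts that each of these mappings attains the sharp bound on the corresponding coefficient $A_\alpha$, it follows that for every $\alpha$ with $|\alpha|=2$ and $\alpha_1\neq 2$ the functional $\Re A_\alpha$ is maximized both by a bounded and by an unbounded mapping in $S^0(\D^n)$, proving the first sentence.

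Second, I would observe that $F_4$ and $F_6$ are elementary polynomial shears and hence polynomial automorphisms of $\C^n$, with explicit polynomial inverses $(w_1-w_2^2, w_2,\ldots,w_n)$ and $(w_1-w_2w_3, w_2,\ldots,w_n)$. Thus $F_4,F_6\in S^0(\D^n)\cap\aut(\C^n)$ and are bounded support points maximizing $\Re A_\alpha$ for $\alpha=(0,2,0,\ldots,0)$ and $\alpha=(0,1,1,0,\ldots,0)$ (and permutations), which delivers the second sentence.

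The point that I expect to require the most care is the bounded example $F_2$: although its image is bounded on $\D^n$, its Jacobian determinant $(1+z_2)^2$ vanishes on $\{z_2=-1\}$, so $F_2$ is \emph{not} the restriction of an automorphism of $\C^n$. The statement of the corollary is thus best read as an existence assertion (witnessed by $F_4$ and $F_6$), rather than as a uniform claim over all admissible $\alpha$. Strengthening it to the case $\alpha=(1,1,0,\ldots,0)$ would require producing a bounded $F\in S^0(\D^n)\cap\aut(\C^n)$ with $|A_{(1,1,0,\ldots,0)}(F)|=2$; such a construction must simultaneously satisfy $\det DF\equiv 1$ on $\C^n$ (which forces a linear relation between the $z_1z_2$-coefficient of $F_1$ and the $z_2^2$-coefficient of $F_2$) and the starlikeness criterion of Theorem \ref{Juergen}, and this extra analytic check is where the main difficulty would lie.
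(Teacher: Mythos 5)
Your proposal is correct and follows essentially the paper's own route: the corollary is read off from the examples of Theorem \ref{bie2} (all of which lie in $S^*(\D^n)\subset S^0(\D^n)$ by the proof of that theorem), with $F_2,F_4,F_6$ bounded, $F_3,F_5,F_7$ unbounded, and the shears $F_4,F_6$ (and their variable permutations) supplying bounded maximizers that are restrictions of automorphisms of $\C^n$.

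Your caveat about $F_2$ is a genuine and worthwhile observation rather than a defect of your argument: $\det DF_2(z)=(1+z_2)^2$ vanishes on $\{z_2=-1\}$ and $F_2$ sends every point $(z_1,-1,z_3,\ldots,z_n)$ to $(0,-1,z_3,\ldots,z_n)$, so $F_2$ is not injective on $\C^n$ and is not the restriction of an automorphism. Hence the examples of Theorem \ref{bie2} witness the last sentence of the corollary only for the multi-indices with $\alpha_1=0$, and your reading of that sentence as an existence assertion (via $F_4$ and $F_6$) is the one actually supported by the paper; a per-$\alpha$ claim including $\alpha=(1,1,0,\ldots,0)$ would need an additional argument that the paper does not give. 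One correction to your closing remark, however: membership in $\aut(\C^n)$ does not force $\det DF\equiv 1$. The Jacobian of a holomorphic automorphism of $\C^n$ is only a zero-free entire function (it is a nonzero constant precisely in the polynomial case), as the automorphism $(z_1e^{z_2},z_2)$ shows, so the ``linear relation'' you describe is not the real obstruction. The actual rigidity is this: for a locally biholomorphic starlike map of the form $F=(z_1\phi(z_2),z_2,\ldots,z_n)$, Theorem \ref{Juergen} shows that $q(z_2)=1-z_2\phi'(z_2)/\phi(z_2)$ lies in the Carath\'eodory class $\mathcal{P}$ with $q'(0)=-\phi'(0)$, and if $|A_{(1,1,0,\ldots,0)}|=|\phi'(0)|=2$ then the equality case of \eqref{Chris} forces $q(z_2)=\frac{1-\eta z_2}{1+\eta z_2}$ and hence $\phi(z_2)=(1+\eta z_2)^2$ for some $|\eta|=1$, i.e.\ a rotation of $F_2$. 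So a bounded maximizer in $\aut(\C^n)$ for this coefficient, if it exists at all, cannot be of this natural form, and whether one exists is left open by the paper.
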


For $n=1$ and every bounded $f\in S(\D)$, we find a Herglotz vector field $H$ and 
a time $T>0$ such that the mapping $e^{-T}f:\D\to \D$ can be written as 
$e^{-T}f = \varphi_{0,T}$, where $\varphi_{0,t}$ solves \eqref{Loewner_ODE} for $H$; see Problem 3 in 
\cite[Section 6.1]{Pom:1975}.  
With Proposition \ref{marina} we obtain the following statement about the reachable 
set of equation \eqref{Loewner_ODE}.

\begin{corollary}
For $n\geq 2,$ there exist bounded mappings $f\in S^0(\D^n)$ which 
don't have the form $e^T\varphi_{0,T}$, where $T>0$ and $\varphi_{0,t}$ 
is a solution to \eqref{Loewner_ODE}. 
\end{corollary}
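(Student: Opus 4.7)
The plan is to produce an explicit bounded member of $S^0(\D^n)$ that is a support point, and then invoke Proposition \ref{marina} to conclude it cannot arise as $e^T\varphi_{0,T}$. This is the cleanest route because Proposition \ref{marina} already says that every reachable mapping $e^{t-s}\varphi_{s,t}$ fails to be a support point, so any bounded support point does the job.

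First I would pick a concrete candidate from the list in Theorem \ref{bie2}. A convenient choice is
\[
F_2(z)=\bigl(z_1(1+z_2)^2,\,z_2,\ldots,z_n\bigr),
\]
which is bounded on $\D^n$ since $|z_1(1+z_2)^2|\le 4$ and the remaining coordinates lie in $\D$. Alternatively one could use $F_4$ or $F_6$, whose boundedness is even more obvious; the argument is the same.

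Next I would verify that $F_2$ is a support point of $S^0(\D^n)$. Expanding the first coordinate gives $A_{(1,1,0,\ldots,0)}(F_2)=2$, and Theorem \ref{bie2}a) says this equals the maximum of $|A_{(1,1,0,\ldots,0)}|$ over $S^0(\D^n)$. Using Lemma \ref{rot} to align the phase, this maximum coincides with the maximum of $\Re A_{(1,1,0,\ldots,0)}$ over $S^0(\D^n)$, so the functional $L(f):=A_{(1,1,0,\ldots,0)}(f)$ attains its maximum of real part at $F_2$. Since $L(\mathrm{id})=0\neq 2$, $\Re L$ is non-constant on $S^0(\D^n)$, and therefore $F_2\in\supp S^0(\D^n)$.

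Finally I would apply Proposition \ref{marina}: every mapping of the form $e^{t-s}\varphi_{s,t}$ fails to be a support point of $S^0(\D^n)$. Since $F_2$ is a support point, it cannot be written as $e^T\varphi_{0,T}$ for any $T>0$ and any solution $\varphi_{0,t}$ of \eqref{Loewner_ODE}. As $F_2$ is bounded on $\D^n$, this proves the claim. There is no real obstacle here: all the work has already been done by Theorem \ref{bie2} and Proposition \ref{marina}, and the only thing to check is the elementary bound $|1+z_2|^2\le 4$ on $\D^n$.
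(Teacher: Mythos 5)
Your proposal is correct and follows essentially the same route the paper intends: exhibit a bounded support point (one of the extremal mappings $F_2,\ldots,F_7$ from Theorem \ref{bie2}, which maximize a coefficient functional $\Re A_\alpha$) and then invoke Proposition \ref{marina}, which rules out any mapping of the form $e^{T}\varphi_{0,T}$ being a support point. The only details to check are the ones you checked: boundedness of $F_2$ and that the coefficient functional is continuous, linear, and non-constant on $S^0(\D^n)$, so nothing is missing.
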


\begin{question}
Are there bounded mappings belonging to $\ex S^0(\D^n)$ for $n\geq 2$? 
\end{question}

\section{Proof of Theorem \ref{bie2}}

For the function $f_1(z)=z_1 + \sum_{|\alpha|\geq 2} A_\alpha z^\alpha$, the case $|\alpha|=2$ splits in 
essentially four cases, namely
$$\alpha = (2,0,...,0),\quad \alpha = (1,1,0,...,0), \quad 
\alpha = (0,2,0,...,0), \quad \alpha=(0,1,1,0,...,0).$$
All other cases can be reduced to one of these four by changing the order of some variables. Furthermore, 
the recursive structure of the Loewner equation shows that variables
$z_j$ with $\alpha_j=0$ don't effect our calculations for the coefficient $A_\alpha$ (see equation \eqref{recursive}). 
Thus we will restrict ourselves to the cases $n=2$ and $n=3$ respectively, i.e. we consider the cases 
$$\alpha = (2,0),\quad \alpha = (1,1), \quad 
\alpha = (0,2), \quad \alpha=(0,1,1).$$ 

First, we prove the following estimates with a technique noticed by Bracci in \cite{bsp} (``shearing process'').

\begin{proposition}\label{cat} Let $(h_1,h_2)\in \M(\D^2)$, 
$h_1(z)=-z_1 + \sum_{|\alpha|\geq 2} c_\alpha z^\alpha.$ 
\begin{itemize}

\item[a)] We have $h_1(z_1,0)\in \M(\D)$ and $|c_{(n,0)}|\leq 2$ for all $n\geq 2$. This estimate is sharp due to  
$$H_1(z)=\left(-z_1\frac{-1+z_1}{-1-z_1},-z_2\right)\in \M(\D^2).$$
\item[b)] We have $\left(-z_1(1-\sum_{\alpha_2\geq 1} c_{(1,\alpha_2)} z_2^{\alpha_2}), h_2\right) \in \M(\D^2)$ and 
$|c_{(1,n)}|\leq 2$ for all $n\geq 1$. This estimate is sharp due to 
$$H_2(z)=\left(-z_1\frac{-1+z_2}{-1-z_2},-z_2\right), H_3(z)=\left(-z_1\frac{-1+z_2}{-1-z_2},-z_2(1-z_2)\right)\in \M(\D^2).$$
\item[c)] 
We have $(-z_1+c_{(0,2)}z_2^2,h_2)\in \M(\D^2)$ and $ |c_{(0,2)}| \leq 1$. This estimate is sharp due to 
$$H_4(z)=\left(-z_1 + z_2^2,-z_2\right), H_5(z)=\left(-z_1+z_2^2, -z_2(1-z_2)\right) \in \M(\D^2).$$
\item[d)] Assume that $(h_1,h_2,h_3)\in \M(\D^3)$, $h_1(z)=-z_1 + \sum_{|\alpha|\geq 2} c_\alpha z^\alpha.$ 
Then $|c_{0,1,1}|\leq 1$. This estimate is sharp due to 
$$H_6(z)=\left(-z_1 + z_2z_3,-z_2,-z_3\right), H_7(z)=\left(-z_1+z_2z_3, -z_2(1+z_2), -z_3(1+z_3)\right) \in \M(\D^3).$$
\end{itemize}
\end{proposition}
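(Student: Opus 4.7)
The plan is to exploit the rotational invariance from Lemma \ref{rot} together with the convexity of $\M(\D^n)$ to isolate each target coefficient, and then apply Carathéodory's coefficient bound for $\mathcal{P}$.

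Part (a) is immediate: the restriction $h_1(\cdot,0)$ lies in $\M(\D)$ because the defining inequality at $(z_1,0)$ reduces to the one-variable condition, so Carathéodory's lemma (Remark \ref{dim_one}) gives $|c_{(n,0)}|\leq 2$. For part (b), I would fix $z_2\in\D$ and $r\in(|z_2|,1)$ and integrate the pointwise inequality $\Re\bigl(-h_1(re^{i\theta},z_2)/(re^{i\theta})\bigr)\geq 0$ around $|z_1|=r$. Cauchy's formula extracts the $z_1$-coefficient and yields $\Re q(z_2)\geq 0$ for $q(z_2):=-\partial_{z_1}h_1(0,z_2)=1-\sum_{n\geq 1}c_{(1,n)}z_2^n$, so $q\in\mathcal{P}$. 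The pair $(-z_1 q(z_2),h_2)$ then satisfies both $\M(\D^2)$-conditions by direct inspection, and the Carathéodory bound for $q$ gives $|c_{(1,n)}|\leq 2$.

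Parts (c) and (d) both rest on a rotational averaging trick. For (c) consider the family
\[
\tilde h_t(z_1,z_2) := \bigl(e^{-2it}h_1(e^{2it}z_1,e^{it}z_2),\ e^{-it}h_2(e^{2it}z_1,e^{it}z_2)\bigr),\qquad t\in[0,2\pi),
\]
each of which lies in $\M(\D^2)$ by Lemma \ref{rot}. Since $\M(\D^2)$ is convex and closed under locally uniform limits, $\bar h:=\frac{1}{2\pi}\int_0^{2\pi}\tilde h_t\,dt$ is again in $\M(\D^2)$. A monomial $c_\beta z^\beta$ of $h_1$ picks up a factor $\frac{1}{2\pi}\int_0^{2\pi} e^{it(2\beta_1+\beta_2-2)}\,dt$, which vanishes unless $2\beta_1+\beta_2=2$; among multi-indices with $|\beta|\geq 2$ the only solution is $\beta=(0,2)$, so $\bar h_1=-z_1+c_{(0,2)}z_2^2$, and analogously $\bar h_2=-z_2$. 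The defining inequality $\Re(\bar h_1/z_1)\leq 0$ on $\{|z_2|\leq|z_1|<1\}$ reads $\Re(c_{(0,2)}z_2^2/z_1)\leq 1$, and since $\sup_{|z_2|\leq|z_1|<1}|z_2|^2/|z_1|=1$ this forces $|c_{(0,2)}|\leq 1$. The shearing claim $(-z_1+c_{(0,2)}z_2^2,h_2)\in\M(\D^2)$ is then automatic: the first-coordinate condition is equivalent to the bound just proved, and the second is inherited from $h_2$. Part (d) follows the same recipe in three variables with rotation direction $(5,2,3)$, chosen so that $5\beta_1+2\beta_2+3\beta_3=5$ has $\beta=(0,1,1)$ as its unique solution with $|\beta|\geq 2$; the resulting $\bar h=(-z_1+c_{(0,1,1)}z_2z_3,-z_2,-z_3)$ lies in $\M(\D^3)$ and produces $|c_{(0,1,1)}|\leq 1$.

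The main obstacle is selecting the correct rotation direction. For (c) the direction $(2,1)$ is essentially forced by the requirement that $\beta=(0,2)$ be the only surviving quadratic monomial. For (d) the point is that $5=2+3$ (so that $\beta=(0,1,1)$ survives) while neither $5/2$ nor $5/3$ is a positive integer (so no pure-variable term $c_{(0,k,0)}$ or $c_{(0,0,k)}$ with $k\geq 2$ competes); the remaining candidates $\beta=(0,k,m)$ with $k,m\geq 1,(k,m)\neq(1,1)$ are eliminated by a short case check. Sharpness of each estimate is witnessed by the explicit mappings $H_1,\dots,H_7$, whose membership in $\M(\D^n)$ is verified by direct substitution into the defining inequalities.
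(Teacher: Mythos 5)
Your proof is correct, and the engine is the same as the paper's: rotational averaging to kill every degree-$2$ monomial except the target one, followed by Carath\'eodory's bound for $\mathcal{P}$ (parts (a), (b) are essentially identical to the paper's). The packaging differs in two ways worth noting. First, the paper does not average the map itself; it pre-rotates so that the target coefficient is real (and, in (d), so that $ic_{(0,3,0)}\in\R$), substitutes $z_j = r_j e^{i\lambda_j\theta}$, and integrates the scalar inequality $\Re\bigl(h_1(z)/z_1\bigr)\leq 0$ in $\theta$; you instead average the conjugated maps $\tilde h_t$ over the rotation group and invoke convexity and closedness of $\M$ to conclude that the averaged map $(-z_1+c_{(0,2)}z_2^2,-z_2)$ (resp.\ $(-z_1+c_{(0,1,1)}z_2z_3,-z_2,-z_3)$) lies in $\M$, from which the bound and the ``shearing'' claim drop out simultaneously. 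This costs you the (routine, but worth stating) verification that $\M(\D^n)$ is closed under locally uniform limits and that the vector-valued integral is a limit of convex combinations. Second, and more substantively, in (d) your weight $(5,2,3)$ makes $\beta=(0,1,1)$ the unique surviving index, whereas the paper's weight $(3,1,2)$ leaves the resonance $\beta=(0,3,0)$ alive and must dispose of it by the extra normalization $\Re(c_{(0,3,0)})=0$; your choice removes that step cleanly. Both routes are valid; the paper's avoids any appeal to closedness/compactness of $\M$, yours avoids the pre-rotation bookkeeping and yields the membership statements of (c), (d) as a by-product.
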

\begin{proof}${}$
\begin{itemize}
 \item[a)] This is just the one-dimensional case, see Remark \ref{dim_one}.
 \item[b)] Let $z_1=xe^{i\theta}, z_2=ye^{i\varphi},$ with  $\theta, \varphi\in\R, x,y\in [0,1), x\geq y, x>0.$
 Then we have 
\begin{eqnarray*} && 0\geq  \Re(h_1(z)/z_1) = -1 + \Re\left(\sum_{|\alpha|\geq 2} c_\alpha z^\alpha/z_1\right) = 
-1 + \sum_{|\alpha|\geq 2}  x^{\alpha_1-1}y^{\alpha_2} \Re\left(c_\alpha e^{i\theta(\alpha_1-1)+i\varphi\alpha_2}\right).
\end{eqnarray*}
Hence, integration with respect to $\theta$ over $[0,2\pi]$ leads to 
\begin{equation*}\label{begger2} 0\geq 
-1 + \sum_{|\alpha|\geq 2, \alpha_1=1}  y^{\alpha_2} 
\Re\left(c_\alpha e^{i\varphi\alpha_2}\right) = 
-1 + \Re\left(\sum_{\alpha_2\geq 1}  c_{(1,\alpha_2)} z_2^{\alpha_2}\right), \quad \text{or}\end{equation*}
\begin{equation*}\label{begger3} 0\leq 
\Re\left(1-\sum_{\alpha_2\geq 1} c_{(1,\alpha_2)} z_2^{\alpha_2}\right).\end{equation*}
Hence, the function $z_2\mapsto 1-\sum_{\alpha_2\geq 1} c_{(1,\alpha_2)} z_2^{\alpha_2}$ belongs to the class 
$\mathcal{P}$ and \eqref{Chris} says
$$ |c_{(1,\alpha_2)}| \leq 2. $$
\item[c)] We can assume that $c_{(0,2)}\in\R.$ Otherwise, we apply a rotation from Lemma \ref{rot} a). 
Let $z_1=xe^{i\theta}, z_2=ye^{i\theta/2},$ for  $\theta\in\R, x,y\in [0,1), x\geq y, x>0.$ Then we have 
\begin{eqnarray*} && 0\geq  \Re(h_1(z)/z_1) = -1 + \Re\left(\sum_{|\alpha|\geq 2} c_\alpha z^\alpha/z_1\right) = 
-1 + \sum_{|\alpha|\geq 2}  x^{\alpha_1-1}y^{\alpha_2} \Re\left(c_\alpha e^{i\theta(\alpha_1-1+\alpha_2/2)}\right)\\
&=& -1 + c_{(0,2)} y^2/x + \sum_{|\alpha|\geq 2, \alpha\not=(0,2)}  x^{\alpha_1-1}y^{\alpha_2} 
\Re\left(c_\alpha e^{i\theta(\alpha_1-1+\alpha_2/2)}\right).
\end{eqnarray*}

The term $\alpha_1-1+\alpha_2/2$ is $\not=0$ for all $\alpha\not=(0,2)$ with 
$|\alpha|\geq 2.$ Hence, integration with respect to $\theta$ over $[0,4\pi]$ leads to 
\begin{equation}\label{begger} 0\geq  -1 + c_{(0,2)} y^2/x\end{equation}
for all $x,y \in (0,1)$ with $0< x \geq y.$ As 
$$ \Re((-z_1+c_{(0,2)}z_2^2)/z_1)\leq -1+c_{(0,2)} |z_2|^2/|z_1| $$
for all $(z_1,z_2)\in \D^2, z_1\not=0,$ we conclude that $(-z_1+c_{(0,2)}z_2^2,h_2)$ belongs to 
$\M(\D^2).$\\
Inequality \eqref{begger} is clearly satisfied for all $x,y \in (0,1)$ with $0< x \geq y$ if and only if 
$|c_{(0,2)}| \leq 1.$

\item[d)] Now we use a rotation from Lemma \ref{rot} a) to achieve that $c_{(0,1,1)}, ic_{(0,3,0)}\in\R.$ 
Let $z_1=xe^{i\varphi}, z_2=ye^{i\varphi/3}, z_3=we^{i2\varphi/3}$ for  $\varphi\in\R, x,y,w\in [0,1), x\geq y, x\geq w, x>0.$
 Then we have 
\begin{eqnarray*} && 0\geq  \Re(h_1(z)/z_1) = -1 + \Re\left(\sum_{|\alpha|\geq 2} c_\alpha z^\alpha/z_1\right)\\ &=& 
-1 + \sum_{|\alpha|\geq 2}  x^{\alpha_1-1}y^{\alpha_2}w^{\alpha_3} 
\Re\left(c_\alpha e^{i\varphi(\alpha_1-1+\alpha_2/3+2\alpha_3/3)}\right)\\
&=& -1 + c_{(0,1,1)}\frac{yw}{x} +\sum_{|\alpha|\geq 2, \alpha\not=(0,1,1)}  x^{\alpha_1-1}y^{\alpha_2}w^{\alpha_3} 
\Re\left(c_\alpha e^{i\varphi(\alpha_1-1+\alpha_2/3+2\alpha_3/3)}\right).
\end{eqnarray*}
The term $\alpha_1-1+\alpha_2/3+2\alpha_3/3$ in the last sum is $=0$ only for $\alpha=(0,3,0)$. Hence, integration with respect to $\theta$ over $[0,6\pi]$ leads to 
\begin{equation} 0\geq 
-1 + c_{(0,1,1)}\frac{yw}{x} + \frac{y^{3}}{x} \Re\left(c_{(0,3,0)}\right) = 
-1 + c_{(0,1,1)}\frac{yw}{x}.\end{equation}

Hence, $$ |c_{(0,1,1)}| \leq 1. $$

\end{itemize}
It is easy to verify that $H_1,...,H_7$ all belong to $\M(\D^n)$ by using the very definition of $\M(\D^n)$.
\end{proof}

\begin{proof}[Proof of Theorem \ref{bie2}] Let $f\in S^0(\D^n)$ with $f=\lim e^t \varphi_{0,t}$ 
for a corresponding evolution family $\{\varphi_{s,t}\}_{0\leq s\leq t}$ with associated Herglotz vector field $H$.\\ 
We now prove the coefficient estimate for $A_\alpha$ by comparing coefficients in the Loewner equation 
\eqref{Loewner_ODE} for $t\mapsto \varphi_{0,t}$ together with the coefficient estimates from Proposition \ref{cat}. 
As these steps are the same for each case, we only consider case c), i.e. $\alpha=(0,2).$\\
Let $\varphi_{0,t}=(w_{1,t}, w_{2,t})$ and write $w_{1,t}(z)=e^{-t}z_1 + \sum_{|\alpha|\geq 2}a_\alpha(t)z^{\alpha}$. Furthermore, 
we write $H(\cdot,t)=(h_{1,t},h_{2,t})$ with $h_{1,t}(z)=-z_1+\sum_{\alpha}c_\alpha(t) z^\alpha.$
 The Loewner equation yields (we use $\dot{y}$ for $\frac{\partial y}{\partial t}$)
 
 \begin{equation}\label{recursive}
 \dot{w}_{1,t} = h_{1,t}(w_{1,t}, w_{2,t}) = -w_{1,t} + c_{(0,2)}(t)w_{2,t}^2 + ... 
 \end{equation}

As $w_{2,t}(z)=e^{-t}z_2 + ...$, comparing the coefficients for $z_2^2$ gives
$$ \dot{a}_{(0,2)}(t) = - a_{(0,2)}(t) + c_{(0,2)}(t) e^{-2t}, \qquad a_{(0,2)}(0)=0, \quad \text{which implies}$$
$$ e^t a_{(0,2)}(t) = \int_0^t c_{(0,2)}(s) e^{-s}\, ds.$$
With Proposition \ref{cat} c) we obtain
$$|e^t a_{(0,2)}(t)| \leq \int_0^t |c_{(0,2)}(s)| e^{-s}\, ds \leq \int_0^t e^{-s}\, ds = 
1 -  e^{-t}.$$
Hence $|A_{(0,2)}|=\lim_{t\to\infty} |e^t a_{(0,2)}(t)| = 1.$

Finally we prove that the mappings $F_1,...,F_5$ belong to $S^0(\D^n).$ Let $H_j$, $j=1,...,7$, be the mappings 
from Proposition \ref{cat}. It is easy to verify that $-(DF_j)^{-1}F_j=H_j.$ Hence, by Theorem \ref{Juergen}, 
 $F_j\in S^*(\D^n)\subset S^0(\D^n).$

\end{proof}

\end{document}